\def\Z{\mathbb Z}
\def\F{\mathbb F}
\newcommand{\leg}[2]{\left(\frac{#1}{#2}\right)}
\newtheorem{thm}{Theorem}[section]
\newtheorem{cor}[thm]{Corollary}
\newtheorem{lma}[thm]{Lemma}
\newtheorem{conj}[thm]{Conjecture}
\theoremstyle{remark}
\newtheorem{rmk}[thm]{Remark}
\newcommand {\SZ} {{\mathbb Z}}
\newcommand{\bsp}{\begin{split}}
\newcommand{\esp}{\end{split}}
\newcommand{\be}{\begin{equation}}
\newcommand{\ee}{\end{equation}}
\newcommand{\bes}{\begin{equation*}}
\newcommand{\ees}{\end{equation*}}
\newcommand{\ba}{\begin{align}}
\newcommand{\ea}{\end{align}}
\newcommand{\bas}{\begin{align*}}
\newcommand{\eas}{\end{align*}}
\newcommand{\bv}\boldsymbol{}
\DeclareMathOperator{\meas}{meas}
\numberwithin{equation}{section}
\renewcommand{\pmod}[1]{\,(\text{mod}\,#1)}
\title{Group structures of elliptic curves over finite fields}
\author[V. Chandee]{Vorrapan Chandee}
\address[Vorrapan Chandee]{
Centre de recherches math\'ematiques\\
Universit\'e de Montr\'eal\\
P.O. Box 6128\\
Centre-ville Station\\
Montr\'eal, Qu\'ebec\\
H3C 3J7\\
Canada;
Department of Mathematics \\
Burapha University \\
169 Long-Hard Bangsaen Rd \\
Chonburi, 20131 \\
Thailand
}
\author[C. David]{Chantal David}
\address[Chantal David]{
Department of Mathematics and Statistics\\
Concordia University\\
1455 de Maisonneuve West\\
Montr\'eal, Qu\'ebec\\
H3G 1M8\\
Canada
}
\author[D. Koukoulopoulos]{Dimitris Koukoulopoulos}
\address[Dimitris Koukoulopoulos]{
D\'epartement de math\'ematiques et de statistique\\
Universit\'e de Montr\'eal\\
CP 6128, Succ. Centre-Ville\\
Montr\'eal, QC H3C 3J7
}
\author[E. Smith]{Ethan Smith}
\address[Ethan Smith]{
Department of Mathematics\\
Liberty University\\
1971 University Blvd\\
MSC Box 710052\\
Lynchburg, VA  24502
}
\begin{document}

\maketitle

\begin{abstract}
It is well-known that if $E$ is an elliptic curve over the finite field $\F_p$, then $E(\F_p)\simeq\Z/m\Z\times\Z/mk\Z$ for some positive integers $m, k$.
Let $S(M,K)$ denote the set of pairs $(m,k)$ with $m\le M$ and $k\le K$ such that there exists an elliptic curve over some prime finite field whose
group of points is isomorphic to $\Z/m\Z\times\Z/mk\Z$.  Banks, Pappalardi and Shparlinski recently conjectured that if $K\le (\log M)^{2-\epsilon}$,
then a density zero proportion of the groups in question actually arise as the group of points on some elliptic curve over some prime finite field.  On the
other hand, if $K\ge (\log M)^{2+\epsilon}$, they conjectured that a density one proportion of the groups in question arise as the group of
points on some elliptic curve over some prime finite field.  We prove that the first part of their conjecture holds in the full range
$K\le (\log M)^{2-\epsilon}$, and we prove that the second part of their conjecture holds in the limited range $K\ge M^{4+\epsilon}$.  In the wider range
$K\ge M^2$, we show that at least a positive density of the groups in question actually occur.
\end{abstract}

\section{Introduction}

Let $E$ be an elliptic curve over $\F_p$, and denote with $E(\F_p)$ its set of points over $\F_p$. It is well-known that $E(\F_p)$ admits the structure of an abelian group. It is then natural to ask for a description of the groups that arise this way as $p$ runs through all primes and $E$ through all curves over $\F_p$. This question was first addressed by Banks, Pappalardi and Shparlinski in~\cite{BPS}. Below we reproduce part of the discussion from~\cite{BPS}.

The first relevant property is that the size of $E(\F_p)$ can never be very far from $p+1$.
Indeed, if $\# E(\F_p) = p + 1 - a_p$, then Hasse proved that $|a_p| \leq 2 \sqrt{p}$.
Setting
\[
x^{-} = x+1-2\sqrt{x}=(\sqrt{x}-1)^2
	\quad\text{and}\quad
x^{+}= x+1+2\sqrt{x}=(\sqrt{x}+1)^2.
\]
for each $x\ge 1$, this is equivalent to saying that $\#E(\F_p)\in (p^-,p^+)$.
It follows from the work of Deuring~\cite{Deu} that for any integer $N$ satisfying $p^-<N<p^+$, there exists an
elliptic curve $E/\F_p$ with $\# E(\F_p) = N$.
Solving the inequalities for $p$ allows us to conclude that, given a positive integer $N$, there is a finite field $\F_p$ and an elliptic curve $E/\F_p$ with $\#E(\F_p)=N$ if and only if there is a prime
$p\in (N^-,N^+)$.
However, this result does not take into account the actual group structure of $E(\F_p)$.

The second relevant property is that, as an abstract abelian group, $E(\F_p)$ has at most two invariant factors.
In other words, we may write that
\[
E(\F_p) \simeq G_{m,k} := \Z / m \Z \times \Z / m k \Z
\]
for some unique positive integers $m,k$.  Refining the ideas already present in the work of Deuring, one can argue that there is an elliptic curve $E/\F_p$ with $E(\F_p)\simeq G_{m,k}$ if and only if $N=m^2k\in (p^-,p^+)$ and
$p\equiv 1\pmod m$.  Arguing as before allows us to conclude that, given a group $G_{m,k}$ of order $N=m^2k$, there is a finite field $\F_p$ and an elliptic curve $E/\F_p$ with $E(\F_p)\simeq G_{m,k}$ if and only if
there is a prime $p\equiv 1\pmod m$ in the interval $(N^-,N^+)$.  The latter condition is equivalent to the assertion that
there is a prime of the form $p = km^2 + j m  + 1$ with $|j|< 2 \sqrt{k}$.  See Corollary~\ref{basicprop} below.

The above characterization gives some interesting consequences. Note that when $k$ is very small it is unlikely that there is a finite field $\F_p$ and a curve $E / \F_p$ such that $E(\F_p) \simeq G_{m,k}$ simply because the interval $(N^-,N^+)$ is too short. For example, there is no curve over $\F_p$ such that
$E(\F_p) \simeq \Z / 11 \Z \times \Z / 11 \Z$, since none of the three integers
$122-11, 122, 122+11$ is prime. Other examples of groups not occurring are given by Banks, Pappalardi, and Shparlinski in~\cite{BPS}.

In order to study the question of which groups $G_{m,k}$ occur as group structures of elliptic curves over $\F_p$ from an average point of view, the authors of~\cite{BPS} defined
\[
S(M, K) = \left\{ m \leq M,k \leq K : \mbox{there is a prime $p$ and a curve $E / \F_p$
with $E(\F_p) \simeq G_{m,k}$} \right\}.
\]
They proved the following result for the cardinality of $S(M,K)$.

\begin{thm}[Banks, Pappalardi, and Shparlinski~\cite{BPS}]\label{theoremBPS} Let $M\ge2$ and $K\ge1$. Then for every fixed $K$, we have
\[
\# S(M,K) \ll_K \frac{M}{\log{M}}.
\]
If $M \leq K^{43/94-\epsilon}$, then
\[
\# S(M, K) \gg \frac{M K}{\log{K}}.
\]
Finally, if $M \leq K^{1/2 - \epsilon},$ then
\[
\# S(M, K) \gg \frac{M K}{(\log{K})^2}.
\]
\end{thm}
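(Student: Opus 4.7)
The starting point is Lemma~\ref{basicprop}, which reformulates membership in $S(M,K)$ as a statement about primes in an arithmetic progression lying in a Hasse window: $(m,k)\in S(M,K)$ if and only if the interval $I_{m,k}=(m^{2}k-2m\sqrt{k}+1,\,m^{2}k+2m\sqrt{k}+1)$ contains a prime $p\equiv 1\pmod m$, or equivalently some value $km^{2}+jm+1$ with $|j|<2\sqrt{k}$ is prime.

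For the upper bound with $K$ fixed, I would apply a standard upper bound sieve (Brun or Selberg) to the quadratic polynomial $f_{k,j}(m)=km^{2}+jm+1$ for each fixed admissible pair $(k,j)$, obtaining
\[
\#\{m\le M:km^{2}+jm+1\text{ is prime}\}\ll\frac{M}{\log M}
\]
with an implicit constant bounded uniformly as $(k,j)$ varies over the bounded region $k\le K$, $|j|<2\sqrt{k}$. Summing over the $O(K^{3/2})$ admissible pairs then yields $\#S(M,K)\ll_{K}M/\log M$.

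For the two lower bounds I would use a first/second-moment argument. Set $\nu_{m,k}=\#\{p\text{ prime}:p\equiv 1\pmod m,\,p\in I_{m,k}\}$, so $(m,k)\in S(M,K)$ exactly when $\nu_{m,k}\ge 1$. Cauchy--Schwarz then gives
\[
\#S(M,K)\ge\frac{\bigl(\sum_{m\le M}\sum_{k\le K}\nu_{m,k}\bigr)^{2}}{\sum_{m\le M}\sum_{k\le K}\nu_{m,k}^{2}}.
\]
For the first moment, interchanging the order of summation in $k$ and $p$ shows that each prime $p\equiv 1\pmod m$ with $p\le Km^{2}$ is counted by roughly $4\sqrt{p}/m^{2}$ windows $I_{m,k}$, so the Bombieri--Vinogradov theorem (valid throughout the range $M\le K^{1/2-\epsilon}$ of both parts (ii) and (iii), since there $M$ lies below the square root of the total range of $p$) yields an asymptotic $\sum_{m,k}\nu_{m,k}\asymp MK^{3/2}/\log(KM^{2})$.

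The second moment is the delicate quantity. Expanding the square produces a diagonal term equal to the first moment, plus an off-diagonal piece counting pairs of distinct primes $p_{1},p_{2}\equiv 1\pmod m$ in a common window $I_{m,k}$, that is, satisfying $|p_{1}-p_{2}|<4m\sqrt{k}$ with $m\mid\gcd(p_{1}-1,p_{2}-1)$. A crude Brun--Titchmarsh-style bound on this off-diagonal sum loses a factor of $(\log K)^{2}$ and, inserted into the Cauchy--Schwarz inequality, yields the weaker lower bound of part (iii). To sharpen this to $MK/\log K$ in part (ii), one needs a more careful input, such as a Selberg upper bound sieve for prime pairs of bounded gap in a fixed residue class, or a Barban--Davenport--Halberstam type variance bound for primes in short intervals in arithmetic progressions. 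I expect the precise exponent $43/94$ to emerge from optimizing the level-of-distribution constraint of such an estimate against the length of the Hasse windows, and pinning down this off-diagonal bound is the main technical obstacle I anticipate.
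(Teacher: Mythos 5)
This statement is Theorem~\ref{theoremBPS}, which the paper cites from Banks--Pappalardi--Shparlinski~\cite{BPS} without reproving it; there is therefore no ``paper's own proof'' to compare against directly. What the paper does prove are the sharper Theorems~\ref{thm-ksmall} and~\ref{Kbiglower}, and it is most informative to compare your proposal with those.

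Your upper-bound argument is correct and matches the sieve strategy used both in~\cite{BPS} and in the proof of Theorem~\ref{thm-ksmall} here; one should just note that the singular series $\prod_p(1-\rho_{k,j}(p)/p)(1-1/p)^{-1}$ is not bounded uniformly in $(k,j)$, so the per-pair bound is $\ll M\log K/\log M$ rather than $\ll M/\log M$ --- this is harmless once the constant is allowed to depend on $K$, and the bulk of the work in Theorem~\ref{thm-ksmall} is precisely to control this singular series on average over $(k,j)$ so as to remove the $K$-dependence.

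For the lower bounds, your Cauchy--Schwarz/second-moment route genuinely differs from the route the paper takes for Theorem~\ref{Kbiglower}. The paper's argument is a pure first-moment one: it lower-bounds the indicator $\mathbb{I}(m,k)$ by $\nu_{m,k}$ multiplied by the reciprocal of the Brun--Titchmarsh \emph{pointwise} upper bound for $\nu_{m,k}$, then sums over $p$, $m$, $k$ and applies Bombieri--Vinogradov. Your moment computation, if carried out carefully, is also sound and in fact stronger than you suggest: with $\sum_{m,k}\nu_{m,k}\asymp MK^{3/2}/\log K$ (from Bombieri--Vinogradov, valid for $M\le K^{1/2-\epsilon}$) and the off-diagonal of $\sum_{m,k}\nu_{m,k}^2$ bounded via a single application of Brun--Titchmarsh by $\ll MK^2/(\log K)^2$, Cauchy--Schwarz already delivers $\#S(M,K)\gg MK$ throughout $M\le K^{1/2-\epsilon}$, i.e.\ the full strength of Theorem~\ref{Kbiglower}, not merely $MK/(\log K)^2$. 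So a careful second-moment argument does not lose the logarithms you anticipate; the paper's first-moment argument is simply a cleaner way to reach the same endpoint, avoiding the off-diagonal computation. As for the exponent $43/94$ in the middle bound of~\cite{BPS}, your guess that it arises from a specific pointwise result on primes in short intervals in arithmetic progressions (rather than from an averaged input such as Bombieri--Vinogradov) is consistent with the fact that this paper supersedes that bound entirely once averaged inputs are used; you do not need to reproduce that exponent if your goal is only to recover (or improve on) the stated conclusions.
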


Moreover, the authors of~\cite{BPS} conjectured the following.

\begin{conj}[Banks, Pappalardi, Shparlinski ~\cite{BPS}]\label{conjBPS}
\[
\# S(M, K) =
	\begin{cases}
		{o}(M K) 			& \mbox{if $K \le (\log{M})^{2 - \epsilon}$}, \cr
 		MK ( 1 + {o}(1) )  	& \mbox{if $K \ge (\log{M})^{2 + \epsilon}$.}
	\end{cases}
\]
\end{conj}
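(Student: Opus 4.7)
\emph{Reformulation and heuristic.} By the characterization recalled before Lemma~\ref{basicprop}, $(m,k)\in S(M,K)$ if and only if the Hasse interval $I_{m,k}:=(N^-,N^+)$ with $N=m^2k$ contains a prime $p\equiv1\pmod m$. This interval has length exactly $4\sqrt N=4m\sqrt k$ and contains about $4\sqrt k$ integers in the arithmetic progression $\equiv1\pmod m$. Heuristically each such integer is prime with probability $(m/\varphi(m))/\log(km^2)$, so the expected prime count $E(m,k)$ is of order $\sqrt k\,(m/\varphi(m))/\log(km^2)\asymp \sqrt k/\log M$ for typical $m$; the threshold $K\asymp(\log M)^2$ is precisely where $E(m,k)$ transitions from $o(1)$ to $+\infty$, accounting for both halves of the conjecture.

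\emph{Upper bound for $K\le(\log M)^{2-\epsilon}$.} Writing $T(m,k)$ for the number of primes $\equiv1\pmod m$ in $I_{m,k}$, the trivial bound $\mathbf 1[(m,k)\in S]\le T(m,k)$ gives
\[
\#S(M,K)\le\sum_{m\le M}\sum_{k\le K}T(m,k),
\]
and the target estimate is $\ll MK^{3/2}/\log M=o(MK)$. A direct application of Brun--Titchmarsh to each $I_{m,k}$ unfortunately produces only $\log\sqrt k$ in the denominator, losing the essential factor $\log M/\log K$. To recover the correct logarithm, I would appeal to a Barban--Davenport--Halberstam-style mean-value theorem for primes in arithmetic progressions within short intervals, averaged over the modulus $m\le M$: this replaces the pointwise sieve bound by the true main term on average, yielding $\sum_m T(m,k)\approx\sum_m E(m,k)$ up to an acceptable error, after which summing over $k$ delivers the target.

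\emph{Lower bound for $K\ge(\log M)^{2+\epsilon}$.} Here $E(m,k)\to\infty$ for most pairs, so one expects $T(m,k)\ge1$ almost always. A Chebyshev-type argument gives
\[
\#\bigl\{(m,k):T(m,k)=0\bigr\}\le\sum_{m,k}\frac{(T(m,k)-E(m,k))^2}{E(m,k)^2},
\]
so a variance bound of Poisson strength, $\sum(T-E)^2\ll\sum E\ll MK^{3/2}/\log M$, combined with $E(m,k)^{-1}=o(1)$ on the relevant pairs, would reduce the right-hand side to $o(MK)$ and establish the asymptotic $\#S(M,K)=MK(1+o(1))$.

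\emph{Main obstacle.} The central difficulty is that the interval length $4\sqrt N$ lies exactly at the square-root barrier for primes in short intervals, which is precisely where unconditional tools (Selberg's $L^2$ theorem, Bombieri--Vinogradov for short intervals, Barban--Davenport--Halberstam) become delicate or fail. A variance estimate strong enough to settle the lower-bound part of the conjecture in the full range $K\ge(\log M)^{2+\epsilon}$ appears beyond current technology. I would therefore expect the lower bound to be provable only in a much wider range, say $K\ge M^{C}$ for an explicit constant $C>0$, where $4m\sqrt k$ greatly exceeds $m$ and classical Bombieri--Vinogradov applies with room to spare; a weaker positive-density statement $\#S(M,K)\gg MK$ in intermediate ranges should be accessible via lower-bound sieves. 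By contrast, the upper-bound half of the conjecture should be obtainable in the full range $K\le(\log M)^{2-\epsilon}$ because one only needs to control a sum of expected values, which remains within reach of averaged sieve techniques.
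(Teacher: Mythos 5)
This statement is a \emph{conjecture}, not a theorem of the paper; what you have written is a survey of which parts are accessible, and your stratification of the problem --- upper bound in the full conjectured range, asymptotic lower bound only when $K$ is a power of $M$, positive density in an intermediate range --- matches Theorems~\ref{thm-ksmall}, \ref{Kasymp} and \ref{Kbiglower} exactly, and your identification of the square-root barrier as the fundamental obstruction is correct.

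The concrete gap is in your proposed route to the upper bound. Replacing $T(m,k)$ by its expectation via an averaged Barban--Davenport--Halberstam-type theorem is hopeless in the range $K\le(\log M)^{2-\epsilon}$: the interval $I_{m^2k}$ has length $4m\sqrt k\asymp\sqrt N$ with $N\asymp M^2K$, while the modulus $m\asymp M$ is of size $\sqrt{N/K}$, so the number of terms of the progression in the interval is only $\asymp\sqrt k\le(\log M)^{1-\epsilon/2}$. No equidistribution theorem applies at this scale; even the short-interval Bombieri--Vinogradov theorem the paper cites (Lemma~\ref{average bv}) requires $M\le K^{1/4-\epsilon}$, which is precisely the \emph{opposite} regime. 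The paper obtains the crucial factor $1/\log M$ instead from the fundamental lemma of the combinatorial sieve (Lemma~\ref{fundlemma}), applied to $km^2+jm+1$ as a polynomial in $m$ with $(k,j)$ fixed; this is an unconditional upper-bound device that needs no equidistribution input. The genuinely delicate step is then to control the sum over $(k,j)$ of the sieve densities $\prod_{\ell\le y}\bigl(1-\rho_{k,j}(\ell)/\ell\bigr)$, which involve the quadratic character of discriminant $j^2-4k$; the paper handles this with Elliott's theorem (Lemma~\ref{lemma-Elliot}) that truncated Euler products of quadratic $L$-functions are $\asymp 1$ outside a thin exceptional set of conductors, with Lemma~\ref{lemma-truncated} covering the exceptions. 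That combination of sieve plus averaged $L$-function input, not a mean-value theorem for primes in progressions, is what makes the full range reachable.

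On the lower-bound side your second-moment heuristic gives the right threshold prediction, but the paper's Theorem~\ref{Kasymp} uses a cheaper first-moment argument: a bad pair $(m,k)\in R(M,K)$ already forces the error $|\psi(y+h;m,1)-\psi(y;m,1)-h/\phi(m)|$ to be $\gg\sqrt K$, essentially its maximal possible size, so one can bound $\#R(M,K)$ directly by the first moment and apply the short-interval Bombieri--Vinogradov theorem, yielding the asymptotic for $M\le K^{1/4-\epsilon}$. Theorem~\ref{Kbiglower} likewise avoids lower-bound sieves: it is a double-counting argument (count primes $p$, and for each $p$ count the pairs $(m,k)$ that it serves) combined with the classical Bombieri--Vinogradov theorem, giving $\#S(M,K)\gg MK$ for $M\le\sqrt K$.
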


The motivation behind the above conjecture can be explained by a simple heuristic. An integer $n$ is prime with probability
about $1/\log n$. For $G_{m,k}$ to be the group of a curve $E$ over some finite field, we need at least one of the integers $n = km^2 + j m + 1$ with  $|j| < 2 \sqrt{k}$ to be prime.
If we assume that these events occur independently of each other, the probability that none of the integers $n = km^2 + j m + 1$, $|j| < 2 \sqrt{k}$, is prime is about
$$
\left(1-\frac1{\log(m^2k)}\right)^{4\sqrt{k}}.
$$
This quantity becomes less than one as soon as $\sqrt{k}\gg\log(m^2k)$. In particular, if $k \ge (\log m)^{2+\epsilon}$, then we expect with probability 1 that
$km^2 + j m + 1$ is prime for some $j\in(-2\sqrt{k}, 2 \sqrt{k})$.  One can make the even bolder guess that if $k$ is large enough, then there is always some $j\in(-2\sqrt{k}, 2 \sqrt{k})$ for which $km^2 + j m + 1$ is prime. This question is completely out of reach with the current technology, as we do not even know whether there are primes in every interval of the form $(x,x+x^{0.524})$ with $x$ large enough.\footnote{The best result known, due to Baker, Harman and Pintz~\cite{BHP}, is that $(x,x+x^{0.525})$ contains primes for every sufficiently large $x$.}

In this paper we improve upon Theorem~\ref{theoremBPS}. Our first result is that the first part of Conjecture~\ref{conjBPS} holds for $M,K$ in the predicted range.

\begin{thm} \label{thm-ksmall} Let $M\ge2$ and $K\ge1$. Then we have that
\[
\# S(M,K) \ll \frac{MK^{3/2}}{\log M}.
\]
In particular, if $K\le(\log M)^{2-\epsilon}$ for some fixed $\epsilon>0$, then
\[
 \# S(M, K) = o_\epsilon(MK)\quad\text{as}\ M\to\infty.
\]
\end{thm}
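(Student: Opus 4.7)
The starting point is the characterization recalled just before the statement: $(m,k)\in S(M,K)$ if and only if at least one of the integers $km^2+jm+1$, with $j\in\Z$ and $|j|<2\sqrt{k}$, is prime. Bounding the existence of such a prime by a sum over $j$ gives
\[
\#S(M,K)\;\le\;\sum_{k\le K}\sum_{|j|<2\sqrt{k}}\#\{m\le M:km^2+jm+1\text{ is prime}\}.
\]
The number of admissible pairs $(k,j)$ is $\sum_{k\le K}(4\sqrt{k}+1)\ll K^{3/2}$, so the theorem reduces to showing that each inner count is $O(M/\log M)$ on average over $(k,j)$.

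For each admissible pair, the polynomial $f_{k,j}(x):=kx^2+jx+1\in\Z[x]$ is irreducible (its discriminant $j^2-4k$ is negative) and has no fixed prime divisor (since $f_{k,j}(0)=1$). My plan is to invoke an upper bound sieve, such as Selberg's sieve as treated in Halberstam--Richert, which yields
\[
\#\{m\le M:f_{k,j}(m)\text{ prime}\}\;\ll\;\mathfrak{S}(f_{k,j})\cdot\frac{M}{\log M},
\]
where $\mathfrak{S}(f_{k,j})=\prod_p (1-\rho_{k,j}(p)/p)(1-1/p)^{-1}$ is the associated singular series and $\rho_{k,j}(p)$ counts the roots of $f_{k,j}$ in $\F_p$.

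The key technical step is then to prove $\sum_{k,j}\mathfrak{S}(f_{k,j})\ll K^{3/2}$. For odd $p\nmid k$ one has $\rho_{k,j}(p)=1+\leg{j^2-4k}{p}$, so reorganizing by the discriminant $\Delta:=j^2-4k$ shows that $\mathfrak{S}(f_{k,j})$ is comparable, up to bounded local corrections at primes dividing $2k$, to $L(1,\chi_\Delta)^{-1}$. For each negative $\Delta$ with $|\Delta|\le 4K$ there are $O(\sqrt{4K-|\Delta|})$ producing pairs $(k,j)$, and the required estimate should follow from $\sum_{|\Delta|\le 4K}L(1,\chi_\Delta)^{-1}\ll K$, itself a consequence of the class number formula together with the fact that the average class number of an imaginary quadratic field of discriminant of size $D$ is of order $\sqrt{|D|}$.

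I expect this averaged singular-series bound to be the main obstacle. A pointwise estimate on $\mathfrak{S}(f_{k,j})$ is impossible, since it can be as large as a small positive power of $|\Delta|$ whenever $L(1,\chi_\Delta)$ is exceptionally small, so genuine cancellation across discriminants must be exploited. Once $\#S(M,K)\ll MK^{3/2}/\log M$ is established, the second assertion is immediate: $\#S(M,K)/(MK)\ll\sqrt{K}/\log M\le(\log M)^{-\epsilon/2}\to0$ whenever $K\le(\log M)^{2-\epsilon}$.
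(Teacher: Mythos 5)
Your overall strategy is the same as the paper's: pass to the bound $\#S(M,K)\le\sum_{k,j}S_{k,j}$, apply an upper bound sieve to each $S_{k,j}$, reorganize by the discriminant $\Delta=j^2-4k$, and estimate the resulting sum over $\Delta$. But there are two genuine gaps in the details. First, your justification of the key estimate $\sum_{|\Delta|\le 4K}L(1,\chi_\Delta)^{-1}\ll K$ is incorrect. The class number formula plus the fact that the \emph{average} class number has order $\sqrt{|D|}$ controls $\sum L(1,\chi_\Delta)$, not $\sum L(1,\chi_\Delta)^{-1}$; the average of $1/h$ cannot be deduced from the average of $h$, and in fact one can only get a \emph{lower} bound $\sum L^{-1}\gg K$ that way (by Cauchy--Schwarz). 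What is actually needed is that $L(1,\chi_\Delta)\gg 1$ for all but very few discriminants, i.e.\ an Elliott/Granville--Soundararajan type result on the distribution of $L(1,\chi)$. The paper supplies exactly this as Lemma~\ref{lemma-Elliot}: for all but $O(K^{1/4})$ conductors, the truncated Euler product is $\asymp 1$; the exceptional conductors are handled crudely via Lemma~\ref{lemma-truncated}, and their contribution is absorbed because there are so few of them.

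Second, your reduction to the full singular series $\mathfrak{S}(f_{k,j})$ takes for granted a bound $S_{k,j}\ll\mathfrak{S}(f_{k,j})M/\log M$ with an absolute implied constant, uniformly over a family of quadratics with discriminant ranging up to $4K$. The sieve (either the fundamental lemma used in the paper, or Selberg's sieve) produces a \emph{truncated} product $M\prod_{\ell\le M^{1/2}}(1-\rho_{k,j}(\ell)/\ell)$, and passing to the singular series requires controlling the tail $\prod_{\ell>M^{1/2}}(1-\chi_\Delta(\ell)/\ell)$, which is again a statement about $L(1,\chi_\Delta)$ and its partial sums; this costs you precisely the same exceptional-character analysis as above if you want uniformity in $\Delta$. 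The paper avoids the issue entirely by never converting the truncated product into $L(1,\chi_\Delta)^{-1}$: it simply shows (via Lemma~\ref{lemma-Elliot}) that the truncated product can be shortened to $\prod_{\ell\le z}$ with $z=\sqrt{\log(4K)}$ for typical $\Delta$, after which the sum over $d\le 4K$ is handled elementarily by expanding the short product and using orthogonality. You should also note that the ``local corrections at primes dividing $2k$'' contribute a factor $k/\phi(k)$, which is not uniformly bounded; the paper devotes a separate computation (the bound on $T_d$) to showing that this factor is $\ll 1$ on average over the pairs $(k,j)$ with fixed discriminant, and this step cannot simply be waved away. Your deduction of the second assertion from the first is correct.
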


\begin{rmk}
The proof of Theorem~\ref{thm-ksmall} begins in the same way as the proof given in~\cite{BPS} for the first assertion of
Theorem~\ref{theoremBPS}.  The main difference is that we explicitly calculate the dependence on $K$.
Calculation of this dependence relies on the ability to approximate the value of $L(1,\chi)$ by a very short Euler product
for most characters $\chi$.  See Lemma~\ref{lemma-Elliot} below.
\end{rmk}

We also prove that the second part of Conjecture \ref{conjBPS} holds for a restricted range of
$M$ and $K$.

\begin{thm} \label{Kasymp} Fix $A\ge1$ and $\epsilon\in(0,1/3]$.
If $M \leq K^{1/4-\epsilon}$, then
\[
 \# S(M, K) = MK+O_{\epsilon,A}\left( \frac{MK}{(\log K)^A}\right).
\]
If, in addition, the Riemann hypothesis for Dirichlet $L$-functions is true, then the above estimate holds when $M\le K^{1/2-\epsilon}$.
\end{thm}

Finally, we show that a lower bound of the correct order of magnitude holds unconditionally in the range $M\le K^{1/2}$:

\begin{thm} \label{Kbiglower}
For $1\le M\le K^{1/2}$, we have that
\begin{equation*}
\# S(M, K) \gg MK.
\end{equation*}
\end{thm}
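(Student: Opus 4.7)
The plan is to apply a Cauchy--Schwarz (second moment) argument. For each $m\le M$ and $k\le K$, set
$r_m(k):=\#\{p\text{ prime}:p\equiv 1\pmod m,\ p\in I_k(m)\}$,
where $I_k(m):=(m^2k-2m\sqrt k,\,m^2k+2m\sqrt k]$. By the characterization given in the introduction, $(m,k)\in S(M,K)$ if and only if $r_m(k)\ge 1$. Since $r_m(k)\cdot\mathbf 1[r_m(k)\ge1]=r_m(k)$, Cauchy--Schwarz gives $\#S(M,K)\ge N^2/Q$, where $N:=\sum_{m\le M}\sum_{k\le K}r_m(k)$ and $Q:=\sum_{m\le M}\sum_{k\le K}r_m(k)^2$. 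Heuristically $N\asymp MK^{3/2}/\log K$ and $Q\asymp MK^2/(\log K)^2$, which would yield $\#S(M,K)\gg MK$.

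To lower bound $N$, I would swap the order of summation to write $N=\sum_m\sum_p w_m(p)$, where the inner sum runs over primes $p\equiv 1\pmod m$ with $p\le m^2K+O(m\sqrt K)$, and $w_m(p):=\#\{k\le K:p\in I_k(m)\}$. An elementary geometric computation shows $w_m(p)\asymp\sqrt p/m^2$ for $p$ in the bulk range $m^4\le p\le m^2K/2$. The sum $\sum_{p\equiv 1\pmod m,\,p\le X}\sqrt p$ can then be evaluated by partial summation from the prime number theorem in arithmetic progressions: Siegel--Walfisz handles $m\le(\log K)^A$ pointwise, while the Bombieri--Vinogradov theorem controls the remaining $m$ on average (applied with $X=M^2K$ and moduli up to $M\le K^{1/2}$, well within the level of distribution for $K$ sufficiently large).

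The upper bound on $Q$ is the principal technical step. Expanding the square and separating the diagonal ($p_1=p_2$) from the off-diagonal terms, the diagonal contributes at most $N$ and is of lower order. For the off-diagonal pairs, I would parametrize $p_2=p_1+hm$ with $1\le h\le 4\sqrt K$, and note that the number of $k\le K$ for which both primes lie in $I_k(m)$ is $O(\sqrt K/m+1)$. It then remains to bound, for each fixed $m$ and $h$, the count $\#\{n\le m^2K:n\equiv 1\pmod m,\ n\text{ and }n+hm\text{ both prime}\}$ by the Selberg sieve applied to the polynomial $n(n+hm)$ restricted to the progression $1\pmod m$. This should produce a bound of the form $\frac{m^3K}{\varphi(m)^2(\log K)^2}\cdot G(h,m)$ with $G(h,m)\le\sigma(h)/h$. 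Summing via $\sum_{h\le H}\sigma(h)/h\ll H$ and $\sum_{m\le M}(m/\varphi(m))^2\ll M$ then yields $Q\ll MK^2/(\log K)^2$, and combining with the lower bound on $N$ finishes the proof.

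The main obstacle I anticipate is carrying out the Selberg sieve bound uniformly in $m$ and $h$ and tracking the singular factor $G(h,m)$ through the summations; this is precisely where the hypothesis $M\le K^{1/2}$ plays its essential role, since it keeps the shift $hm\le 4M\sqrt K\le 4K$ small compared to the base point $m^2K$, placing us in the range where the sieve upper bound is of the expected order. A secondary, more routine issue is extracting a pointwise lower bound on $N_m$ for most $m$ from the averaged Bombieri--Vinogradov estimate by discarding the contribution of the "bad" moduli by positivity.
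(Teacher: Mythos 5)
Your proposal is correct in outline and, once the sieve computation is carried out carefully, would give the theorem; but it takes a genuinely different and substantially heavier route than the paper. You run a Cauchy--Schwarz (first moment over second moment) argument, $\# S(M,K)\ge N^2/Q$, which forces you to control the second moment $Q=\sum_{m,k}r_m(k)^2$. The off-diagonal part of $Q$ is a sum over pairs of primes $p_1,p_2\equiv 1\pmod m$ with $p_2-p_1=hm$, both lying in a single short interval, and you need a Selberg-sieve upper bound for such ``prime pairs in a progression'' that is uniform in both $m$ and $h$, followed by careful bookkeeping of the singular-series factor $G(h,m)$ through the sums over $h$ and $m$. This is doable, but it is by far the most delicate part of the plan.

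The paper sidesteps the second moment entirely with a simple $L^\infty$ trick. Since $\mathbb{I}(m,k)=\mathbf 1[r_m(k)\ge 1]$ and the Brun--Titchmarsh inequality gives the pointwise bound $r_m(k)\le \frac{8m\sqrt k}{\phi(m)\log(4\sqrt k)}$, one has
\[
\mathbb{I}(m,k)\ \ge\ \frac{\phi(m)\log(4\sqrt k)}{8m\sqrt k}\,r_m(k),
\]
so $\#S(M,K)$ is directly bounded below by a normalized first moment. After that, the structure is the same as yours: restrict to $m\in(3M/4,M]$, $k\in(K/5,K]$, swap to sum over primes $p\asymp M^2K$, observe that each such $p$ contributes $\gg\sqrt K/M\ge 1$ values of $k$ (this is exactly where $M\le K^{1/2}$ is used, in both arguments), and invoke Bombieri--Vinogradov. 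In short, the paper replaces ``first moment$^2$ / second moment'' with ``first moment / sup-norm'', which costs nothing (Brun--Titchmarsh is one line) and eliminates the sieve estimate for prime pairs. One small correction to your narrative: $M\le K^{1/2}$ is not really needed to keep the shift $hm$ small relative to the base point $m^2K$ (that holds automatically); its essential role is to guarantee $w_m(p)\asymp\sqrt p/m^2\gg 1$ when $p\asymp m^2K$, i.e.\ that every relevant prime actually hits at least one admissible $k$.
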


\subsection*{Notation} Given an integer $n$, we let $P^+(n)$ and $P^-(n)$ denote its largest and smallest primes factors, respectively, with the notational conventions that $P^+(1)=1$ and $P^-(1)=\infty$. As usually, $\tau,\ \mu$, $\phi$ and $\Lambda$ denote the divisor, the M\"obius, the totient and the von Mangoldt function, respectively. Furthermore, we let $\pi(x;q,a)$ be the number of primes up to $x$ that are congruent to $a\pmod q$ and
\[
\psi(x;q,a) = \sum_{n\equiv a\pmod q} \Lambda(n).
\]
The letters $p$ and $\ell$ always denote prime numbers. Finally, we write $f\ll_{a,b,\dots}g$ if there is a constant $c$, depending at most on $a,b,\dots$, such that $|f|\le cg$, and we write $f\asymp_{a,b,\dots}g$ if $f\ll_{a,b,\dots}g$ and $g\ll_{a,b,\dots} f$.

\section{Preliminaries and Cohen-Lenstra heuristics}\label{CLH}

In this section we explain how the existence of an elliptic curve over a prime finite field with a given group structure
is equivalent to the existence of a prime in a certain interval with a given congruence condition.
Some of the results and arguments of this section are very similar to Section 3 of \cite{BPS}, but we reproduce them here for the sake of completeness. The first lemma is a result of R\"uck \cite{ruck}, who used the work of Deuring, Waterhouse, and Tate-Honda to characterize those groups which actually occur as the group of points on
elliptic curves over finite fields.

\begin{lma}[R\"uck]\label{lemma-ruck}
Let $N=\prod_{\ell} \ell^{h_\ell}$ be a possible order $\#E(\F_p)$ for an elliptic curve $E/\F_p$, i.e.,
$N \in (p^{-}, p^+)$. Then all the possible groups $E(\F_p)$ with $\# E(\F_p) = N$ are
$$
\Z / p^{h_p}\Z \times \prod_{\ell \neq p}\left( \Z / \ell^{b_\ell} \Z \times \Z / \ell^{h_\ell-b_\ell}\Z\right)
$$
where $b_\ell$ are arbitrary integers satisfying $0 \leq b_\ell \leq \min{ \left( v_\ell(p-1), \lfloor \frac{h_\ell}{2} \rfloor \right)},$
and $v_\ell(\alpha)$ is the highest power of $\ell$ dividing $\alpha.$
\end{lma}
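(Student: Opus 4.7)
The plan is to establish both necessity (any $E/\F_p$ with $\# E(\F_p) = N$ has the stated group structure) and sufficiency (every group of the stated form actually arises as $E(\F_p)$ for some $E/\F_p$). The necessity direction is fairly elementary, using only the Tate module and the Weil pairing; the sufficiency direction is much deeper and relies on the Deuring--Waterhouse--Tate-Honda classification of elliptic curves over finite fields.

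For necessity, I would argue at each prime $\ell$ separately. When $\ell \neq p$, the Tate module $T_\ell E$ is free of rank $2$ over $\Z_\ell$, so the $\ell$-primary part of $E(\F_p)$ has at most two invariant factors and is of the form $\Z/\ell^{b_\ell}\Z \times \Z/\ell^{h_\ell-b_\ell}\Z$ with $0 \le b_\ell \le \lfloor h_\ell/2 \rfloor$. The bound $b_\ell \le v_\ell(p-1)$ follows from the Weil pairing: the full subgroup $E[\ell^{b_\ell}] \simeq (\Z/\ell^{b_\ell}\Z)^2$ is $\F_p$-rational, so the non-degenerate Galois-equivariant pairing $E[\ell^{b_\ell}] \times E[\ell^{b_\ell}] \to \mu_{\ell^{b_\ell}}$ forces $\mu_{\ell^{b_\ell}} \subseteq \F_p^\times$, and hence $\ell^{b_\ell} \mid p - 1$. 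When $\ell = p$, the standard dichotomy that $E(\overline{\F_p})[p^n]$ is either trivial or cyclic of order $p^n$ shows that the $p$-part of $E(\F_p)$ is automatically cyclic, giving the factor $\Z/p^{h_p}\Z$.

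For sufficiency, I would set $a_p := p + 1 - N$, which satisfies $|a_p| < 2\sqrt p$ by the assumption $N \in (p^-, p^+)$. Deuring's theorem (extended by Waterhouse) produces an $\F_p$-isogeny class of elliptic curves with Frobenius trace $a_p$, and within that class Waterhouse parametrizes the isomorphism classes by orders $\mathcal{O}$ in the endomorphism algebra: an imaginary quadratic field in the ordinary case, a quaternion algebra in the supersingular case. In the ordinary case, $E(\F_p) \simeq \mathcal{O}/(\pi - 1)\mathcal{O}$, where $\pi$ denotes the Frobenius endomorphism, and an analysis of which orders $\mathcal{O}$ are admissible shows that every tuple $(b_\ell)$ with $b_\ell \le \min(v_\ell(p-1), \lfloor h_\ell/2 \rfloor)$ is realized by some choice of $\mathcal{O}$. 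The supersingular case adds no new groups and is handled separately via an explicit analysis of the finitely many quaternion orders that can arise.

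The main obstacle is the sufficiency step, specifically the verification that no congruence condition on $p$ beyond $\ell^{b_\ell} \mid p - 1$ is imposed by the prescribed group structure. This reduces to exhibiting an appropriate order $\mathcal{O}$ inside the endomorphism algebra for each allowed tuple $(b_\ell)$. The ordinary case is conceptually clean because the endomorphism ring is commutative, but the supersingular case is more delicate due to non-commutativity and typically requires a direct case analysis based on the value of $a_p$ modulo $p$.
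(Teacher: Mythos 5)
The paper does not prove Lemma~\ref{lemma-ruck}; it simply cites R\"uck~\cite{ruck}, noting that R\"uck's result rests on the Deuring--Waterhouse--Tate--Honda theory. So there is no proof in the paper to compare against line by line. That said, your outline is a faithful reconstruction of the standard route to R\"uck's theorem: the necessity direction (Tate module gives at most two invariant factors with $b_\ell\le\lfloor h_\ell/2\rfloor$; Weil pairing forces $\ell^{b_\ell}\mid p-1$; the $p$-primary part is cyclic) is exactly the easy half and is correct as written, and the sufficiency direction via Waterhouse's classification of endomorphism orders together with the isomorphism $E(\F_p)\simeq\mathcal{O}/(\pi-1)\mathcal{O}$ for ordinary curves is the right machinery.

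Two caveats are worth flagging. First, the phrase ``Waterhouse parametrizes the isomorphism classes by orders'' overstates things slightly: Waterhouse determines which orders $\Z[\pi]\subset\mathcal{O}\subset\mathcal{O}_K$ occur as endomorphism rings in a given isogeny class (several isomorphism classes may share an order), but that weaker statement is all you need, since you only must exhibit one curve realizing each admissible $(b_\ell)$. The module isomorphism $E(\F_p)\simeq\mathcal{O}/(\pi-1)\mathcal{O}$ is legitimate here because orders in quadratic fields are Gorenstein, so the Lenstra-type structure theorem applies cleanly in rank~two. Second, the remark that the supersingular case ``adds no new groups'' is misleading for the base field $\F_p$: when $N=p+1$ (and $p>3$) there are \emph{no} ordinary curves with $\#E(\F_p)=N$, since the trace $a=0$ is not coprime to $p$, so the supersingular analysis is not a redundancy but the entire content of the lemma for those $N$. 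You acknowledge that this case is ``handled separately,'' which is accurate, but the framing as adding nothing new is not. Beyond these points, the main body of the sufficiency argument --- computing which quotients $\mathcal{O}/(\pi-1)\mathcal{O}$ arise as $\mathcal{O}$ varies over admissible orders, and checking that every tuple $(b_\ell)$ with $b_\ell\le\min(v_\ell(p-1),\lfloor h_\ell/2\rfloor)$ is hit --- is the real work, and you correctly identify it as the obstacle rather than glossing over it.
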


As a corollary of the above lemma, we have the following result, which is Lemma 3.5 in~\cite{BPS}.

\begin{cor}\label{basicprop}  Let $m$ and $k$ be integers. There is a prime $p$ and a curve $E$ over $\F_p$ such that $E(\F_p) \simeq G_{m,k}$ if and only if there is a prime $p \equiv 1 \pmod m$ in
the interval  $$I_{m^2k} :=  \left( km^2 - 2m \sqrt{k}+1, km^2 + 2m \sqrt{k}+1 \right)$$
or, equivalently, if and only if there is a prime
$p = km^2 + j m + 1$ with $|j|< 2 \sqrt{k}$.
\end{cor}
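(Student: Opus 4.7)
The plan is to deduce the corollary directly from R\"uck's Lemma~\ref{lemma-ruck} together with Hasse's bound, by matching the invariant factors of $G_{m,k}$ against R\"uck's normal form. First, set $N=m^2k$. If there exists $E/\F_p$ with $E(\F_p)\simeq G_{m,k}$, then Hasse forces $N\in(p^-,p^+)$; solving $(\sqrt p-1)^2<m^2k<(\sqrt p+1)^2$ for $p$ yields exactly $p\in I_{m^2k}$. Conversely, if $p\in I_{m^2k}$, then $N\in(p^-,p^+)$, so by Deuring there is at least one curve $E/\F_p$ with $\#E(\F_p)=N$ to which R\"uck's classification applies.

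Next, I would compare the $\ell$-primary parts of $G_{m,k}$ with those allowed by R\"uck. Writing $m=\prod_\ell \ell^{a_\ell}$ and $k=\prod_\ell \ell^{c_\ell}$, so that $h_\ell=2a_\ell+c_\ell$, the invariant factors of the $\ell$-part of $G_{m,k}$ are $\ell^{a_\ell}$ and $\ell^{a_\ell+c_\ell}$. For the prime $\ell=p$, R\"uck's lemma forces the $p$-part to be cyclic, which is possible only when $a_p=0$, i.e.\ $p\nmid m$. For $\ell\ne p$, uniqueness of invariant factors forces $b_\ell=a_\ell$, and the constraint $0\le b_\ell\le\min(v_\ell(p-1),\lfloor h_\ell/2\rfloor)$ collapses to $a_\ell\le v_\ell(p-1)$, since $a_\ell\le a_\ell+\lfloor c_\ell/2\rfloor=\lfloor h_\ell/2\rfloor$ is automatic. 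Taking the product over all $\ell\mid m$ gives $m\mid p-1$; conversely, if $m\mid p-1$, then the choices $b_\ell=a_\ell$ for $\ell\mid m$ and $b_\ell=0$ for $\ell\nmid m$ (and $\ell\ne p$) are admissible and recover the group $G_{m,k}$.

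For the reparametrization, the congruence $p\equiv1\pmod m$ lets us write $p=1+mt$ for some integer $t$, and then the condition $p\in I_{m^2k}=(km^2-2m\sqrt k+1,\,km^2+2m\sqrt k+1)$ is equivalent to $|t-km|<2\sqrt k$; setting $j=t-km$ yields $p=km^2+jm+1$ with $|j|<2\sqrt k$. No step here presents a genuine difficulty; the only point requiring mild care is the asymmetry at $\ell=p$ in R\"uck's lemma, which forces the side condition $p\nmid m$ — and this is in any case a consequence of $m\mid p-1$ as soon as $m>1$.
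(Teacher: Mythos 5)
Your proof is correct, and it differs from the paper's in one meaningful respect: the forward direction. The paper establishes $p\equiv 1\pmod m$ by a Weil‑pairing argument — since $E(\F_p)\supseteq E[m]$ and the Weil pairing is a surjective Galois‑equivariant map to $\mu_m$, the $m$‑th roots of unity lie in $\F_p$, hence $m\mid p-1$. You instead extract the same congruence purely from R\"uck's classification by matching the $\ell$‑primary invariant factors of $G_{m,k}$ with R\"uck's normal form, forcing $b_\ell=v_\ell(m)$ and hence $v_\ell(m)\le v_\ell(p-1)$ for $\ell\ne p$, together with $p\nmid m$. Both are valid; the Weil‑pairing route is shorter and standard, while yours is more self‑contained in that it uses only the already‑stated Lemma~\ref{lemma-ruck} and elementary group theory. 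Your argument is also slightly more careful than the paper's converse in one spot: you explicitly flag that R\"uck's asymmetric treatment of $\ell=p$ requires $p\nmid m$ and observe that this follows from $m\mid p-1$, whereas the paper glosses over this by writing ``take $b_\ell=v_\ell(m)$ for all $\ell\mid m$'' without remarking that $p$ cannot be among those $\ell$. The converse direction and the reparametrization $p=km^2+jm+1$ are essentially identical to the paper's.
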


\begin{proof}
Suppose that there exists an elliptic curve $E$ over $\F_p$ such that $E(\F_p) \simeq G_{m,k}$.
As mentioned in the introduction, we must have that $N=m^2k=\#E(\F_p)\in (p^-,p^+)$.
Solving for $p$ as in the introduction gives that $p\in (N^-, N^+) = I_{mk^2}$.
Since the $m$-torsion points are contained in $E(\F_p)$ and since the Weil pairing is surjective,
$\F_p$ must contain the $m$-th roots of unity, which is equivalent to saying that $p \equiv 1 \pmod m$.

Conversely, suppose that there is a prime $p \in I_{m^2k}$ such that $p \equiv 1 \pmod m$, and let $N=km^2$.
It is easy to check that $|p+1-N| \leq 2 \sqrt{p}$, that is to say that $N$ is an admissible order.
Writing $N = km^2 = \prod_{\ell} \ell ^{h_\ell}$, we clearly have that $v_\ell(m) \leq\lfloor h_\ell/2 \rfloor$.
Furthermore, since $p \equiv 1 \pmod m$, we also have that $v_\ell(p-1) \geq v_\ell(m)$ for each $\ell \mid m$.
Thus, we may take $b_\ell=v_\ell(m)$ in Lemma \ref{lemma-ruck} for all $\ell\mid m$.
So, in particular, $h_\ell - b_\ell = v_\ell(m) + v_\ell(k)$, and we conclude that
\[
G_{m,k} = \prod_{\ell}\left( \Z / \ell^{v_\ell(m)} \Z \times \Z / \ell^{v_\ell(m)+v_\ell(k)} \Z\right)
\]
is an admissible group. This completes the proof of the corollary.
\end{proof}

\begin{rmk}\label{rem:aftercor2.2}
Using Corollary \ref{basicprop}, we readily find that
\[
\# S(M, K) = \sum_{m \leq M} \sum_{k \leq K} \mathbb{I} (m,k),
\]
where
\[
\mathbb{I}(m,k) :=
		\begin{cases}
				1 & \mbox{if there exists a prime $p \in I_{m^2k}$ such that $p \equiv 1 \pmod m$,} \cr
				0 & \mbox{otherwise.} \end{cases}
\]
\end{rmk}

The fact that the groups $G_{m,k}$ are more likely to occur when $m$ is small
is in accordance with the general philosophy of the Cohen-Lenstra heuristics, which predict that random abelian
groups ``naturally" occur with probability inversely proportional to the size of their automorphism groups.
That is, the presence of many automorphisms decreases the frequency of occurrence.
In particular, those groups which are ``nearly cyclic" ($m$ relatively small) should be the most likely to occur,
and those groups which are ``very split" ($m$ relatively large) should be the least likely to occur.
This is what we observe in Theorems~\ref{thm-ksmall},~\ref{Kasymp}, and~\ref{Kbiglower}.
Indeed, the ``very split" groups occur with density zero, and the ``nearly cyclic" groups occur with density one.

In order to see that the probability of occurrence of the groups $G_{m,k}$ is really in correspondence with the weights suggested by the Cohen-Lenstra heuristics, one should count the number of times a given group $G_{m,k}$ occurs as $E(\F_p)$, and not only if it occurs.
More precisely, given a group $G$ of order $N$ and a prime $p$, let
\[
M_p(G) = \# \left\{ E / \F_p : E(\F_p) \simeq G \right\}.
\]
The quantity in question then is the sum
\[
M(G) := \sum_{N^- < p < N^+ } M_p(G).
\]
Using the proper generalization of Deuring's work, $M(G)$ can be related to a certain average of Kronecker class numbers.
See~\cite{Sch} for example.
It is shown in~\cite{David-Smith-MEG} that, under a suitable hypothesis for the number of primes in short arithmetic progressions,
\begin{eqnarray} \label{CH}
\frac{M(G_{m,k})}{4 \sqrt{N}/\log N} \sim_A
K(G_{m,k})\cdot \frac{\# G_{m,k}}{\# \mbox{Aut}(G_{m,k})} \cdot N^{3/2}\quad (N=m^2k,\,m\le(\log k)^A,\,k\to\infty),
\end{eqnarray}
where $K(G_{m,k})$ is non-zero and uniformly bounded for all integers $m$ and $k$. So we see that the average frequency of occurrence of groups of elliptic curves over finite fields is compatible  with the Cohen-Lenstra heuristics.

As we mentioned above, the results of \cite{David-Smith-MEG} are conditional under some hypothesis for the number of primes
in short arithmetic progressions because the intervals $(N^-, N^+)$ are so short that
even the Riemann hypothesis does not guarantee the existence of a prime. Nevertheless, it is possible to obtain unconditional
results displaying the Cohen-Lenstra phenomenon, by showing that the asymptotic in~\eqref{CH}
is an upper bound for all groups $G$, and a lower bound for most of the groups $G$ (modulo constants). This work is in progress \cite{CDKS2}.
The proof of the lower bound for most of the groups $G$ has similarities with the proof of Theorem \ref{Kasymp} of the present paper and, in particular, it requires the generalization of Selberg's theorem about primes in short arithmetic progressions due to the third author
\cite{DK}, but it involves more technical difficulties, as one needs to combine this with the arguments of \cite{David-Smith-MEG}.

\section{Auxiliary results}

In this section, we collect some technical results that will be needed to prove the theorems. First, we state the fundamental lemma of the combinatorial sieve (see, for example,~\cite[Theorem 3, p. 60]{Te}), which will be used in the proof of Theorem \ref{thm-ksmall}. Given a finite set of integers $\mathcal A$ and a number $y\ge1$, we set
\[
S(\mathcal{A},y) := \# \{ a \in \mathcal{A} : P^-(a)>y \}.
\]
As is customary, we assume that there is a multiplicative function $\rho$ and a number $X$ such that for every integer $d$
\[
\#\{a\in\mathcal{A}: a\equiv 0\pmod d\} = X\cdot \frac{\rho(d)}{d}+R_d
\]
for some real number $R_d$, which we think of as an error term. Then we have the following result.

\begin{lma} \label{fundlemma} Let $\mathcal{A}$, $\rho$, $X$ and $\{R_d:d\in\mathbb{N}\}$ be as above. If $\rho(p)\le\min\{2,p-1\}$ for all primes $p$, then we have that
$$ S(\mathcal A, y) = X \prod_{\ell \leq y} \left(1 - \frac{\rho(\ell)}{\ell}\right) \left\{1 + O(u^{-u/2}) \right\} + O\left(\sum_{ \substack{ d \leq y^u ,\, P^+(d)\le y}} \mu^2(d)|R_d| \right),$$ uniformly for all $y\ge1$ and $u\ge1$.
\end{lma}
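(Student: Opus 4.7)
The plan is to prove the lemma via a truncated Möbius inversion, i.e.\ the (pure) Brun combinatorial sieve. Setting $P(y):=\prod_{p\le y}p$ and $V(y):=\prod_{p\le y}(1-\rho(p)/p)$, I would begin from the exact identity
\[
S(\mathcal{A},y)=\sum_{a\in\mathcal A}\sum_{d\mid\gcd(a,P(y))}\mu(d)=\sum_{d\mid P(y)}\mu(d)\,\#\{a\in\mathcal A:d\mid a\}.
\]
Inserting the hypothesis $\#\{a\in\mathcal A:d\mid a\}=X\rho(d)/d+R_d$ produces the main term $XV(y)$ but an error $\sum_{d\mid P(y)}|R_d|$ that is too large, since $d$ can be as big as $P(y)$.

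The first step is to truncate by $\omega(d)$ using the Bonferroni inequalities:
\[
(-1)^r\sum_{\substack{d\mid n\\ \omega(d)\le r}}\mu(d)\ \ge\ (-1)^r\mathbf 1_{n=1}\qquad(n\ge1,\ r\ge0),
\]
which give an upper bound for $S(\mathcal A,y)$ when $r$ is even and a lower bound when $r$ is odd. Only squarefree $d\mid P(y)$ with $\omega(d)\le r$ (hence $d\le y^r$) contribute to the resulting sums, so choosing $r=\lfloor u\rfloor$ immediately yields the remainder bound $O\big(\sum_{d\le y^u,\,P^+(d)\le y}\mu^2(d)|R_d|\big)$ stated in the lemma.

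The second and main step is to show that the truncated main term equals $XV(y)(1+O(u^{-u/2}))$. Since the untruncated Möbius sum equals exactly $XV(y)$, this reduces to bounding the tail
\[
T_r:=\sum_{\substack{d\mid P(y)\\ \omega(d)>r}}\mu^2(d)\frac{\rho(d)}{d}.
\]
Here the assumption $\rho(p)\le 2$ is essential. Grouping by $k=\omega(d)$ gives
\[
\sum_{\substack{d\mid P(y)\\ \omega(d)=k}}\mu^2(d)\frac{\rho(d)}{d}\ \le\ \frac{1}{k!}\bigg(\sum_{p\le y}\frac{\rho(p)}{p}\bigg)^{k},
\]
and a Rankin-type comparison with $\log V(y)^{-1}\asymp\sum_{p\le y}\rho(p)/p+O(1)$, followed by Stirling's formula, shows that the $k$-th term decays like $V(y)\cdot(eL/k)^{k}$ with $L:=\sum_{p\le y}\rho(p)/p$. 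Summing over $k>r$ with $r=\lfloor u\rfloor$ delivers the factor $u^{-u/2}$.

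The main obstacle is this last tail estimate: the naive Rankin argument loses constants, and obtaining exactly $u^{-u/2}$ (as opposed to $u^{-cu}$ for some smaller $c$) requires the careful implementation of the Brun sieve found in Tenenbaum's book, which couples the Möbius truncation to an inductive comparison of $V$ across nested primorial ranges together with an optimal choice of the truncation level $r$. In practice one would invoke Tenenbaum's formulation rather than redo this optimization, which is precisely what the statement of the lemma does.
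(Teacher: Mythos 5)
The paper does not prove this lemma; it is quoted directly from Tenenbaum's book (Theorem~3, p.~60), so there is no internal proof to compare against. What you sketch is the \emph{pure} Brun sieve (Bonferroni truncation at a single level $\omega(d)\le r$ with $r=\lfloor u\rfloor$), and this does not yield the lemma as stated: the gap is in the tail estimate. Writing $L:=\sum_{p\le y}\rho(p)/p$, the Bonferroni decomposition only gives $T_r\le\sum_{k>r}L^k/k!$, and you need $T_r\ll V(y)u^{-u/2}$ with $V(y)\asymp e^{-L}$. But for $r<L/2$, say, and $L$ large, one has $\sum_{k>r}L^k/k!\gg e^{L}$, so the quantity you want to be $\ll e^{-L}u^{-u/2}$ is in fact $\gg e^{L}$. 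This is not a lost constant in the exponent of $u$; the truncated main term $X\sum_{\omega(d)\le r}\mu(d)\rho(d)/d$ simply fails to approximate $XV(y)$ at all once $L\gg r$, because the partial sums of $\sum_d\mu(d)\rho(d)/d$ oscillate with amplitude $\asymp L^r/r!\gg V(y)$. The lemma is asserted uniformly for all $y\ge1$, hence for arbitrarily large $L$ relative to $u$; in the paper's own application $y=M^{1/2}$ and $u=1$, so $L\asymp\log\log M\to\infty$ and the pure sieve gives nothing.

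The missing ingredient is the genuinely \emph{combinatorial} part of Brun's sieve. One does not truncate by a single cap on $\omega(d)$: one partitions the primes of $[2,y]$ into nested ranges $(y_{j+1},y_j]$ with $y=y_0>y_1>\cdots$ and permits at most $b_j$ prime factors from the $j$-th range, with $b_j$ chosen so that the local products $\prod_{y_{j+1}<p\le y_j}(1-\rho(p)/p)$ can be compared inductively across ranges. The sieve weights remain supported on squarefree $d\le y^u$ with $P^+(d)\le y$, which recovers your remainder term, but the main-term analysis is no longer a single tail $T_r$. Your closing paragraph gestures at this (``nested primorial ranges'') but misdiagnoses the obstacle as ``losing constants'' and needing an ``optimal choice of $r$''; the real obstruction is structural and already blocks \emph{any} nontrivial bound when $L>r$, not merely the sharpening of $u^{-cu}$ to $u^{-u/2}$.
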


The next lemma will be used in the proof of Theorem~\ref{thm-ksmall}.

\begin{lma} \label{lemma-truncated}
Fix $\epsilon>0$ and let $\chi$ be a non-principal character mod $q$. For every $y\ge1$, we have that
$$
\prod_{\ell\le y}\left(1-\frac{\chi(\ell)}\ell\right) \ll_\epsilon q^{1/2+\epsilon}.
$$
\end{lma}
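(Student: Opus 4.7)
My plan is to split into two regimes according to the size of $y$ compared with $q$. For $y\le \exp(q^{1/2})$, the trivial pointwise bound
\[
\left|\prod_{\ell\le y}\left(1-\frac{\chi(\ell)}{\ell}\right)\right|\le\prod_{\ell\le y}\left(1+\frac{1}{\ell}\right)\ll\log(y+2)
\]
already gives $\ll q^{1/2}\le q^{1/2+\epsilon}$ in this range.

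For $y>\exp(q^{1/2})$, I use the factorization
\[
\prod_{\ell\le y}\left(1-\frac{\chi(\ell)}{\ell}\right) = \frac{1}{L(1,\chi)}\cdot\prod_{\ell>y}\left(1-\frac{\chi(\ell)}{\ell}\right)^{-1}
\]
and bound the two factors separately. The first factor is $\ll q^{1/2}$ since $|L(1,\chi)|\gg q^{-1/2}$ holds unconditionally: for real $\chi$ this comes from Dirichlet's class number formula together with $h\ge 1$, and for complex $\chi$ the standard non-vanishing argument gives the even stronger $|L(1,\chi)|\gg 1/\log q$. A reduction to the primitive character $\chi^*$ inducing $\chi$ costs at most a factor $q/\phi(q)\ll\log\log q$, which is absorbed in $q^\epsilon$. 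For the tail factor, taking logarithms and writing $-\log(1-\chi(\ell)/\ell)=\chi(\ell)/\ell+O(1/\ell^2)$, it suffices to show $\sum_{\ell>y}\chi(\ell)/\ell=O(1)$. This follows from the Siegel--Walfisz bound $\theta(x,\chi)\ll_A x\exp(-c\sqrt{\log x})$ (valid uniformly for $q\le(\log x)^A$, which applies here with $A=2$ since $\log y\ge q^{1/2}$) together with partial summation, in fact giving $\sum_{\ell>y}\chi(\ell)/\ell\ll\exp(-c\sqrt{\log y})=o(1)$. Hence the tail factor is $1+o(1)$, and combining we obtain the desired bound $\ll q^{1/2}\le q^{1/2+\epsilon}$.

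The main technical obstacle is the unconditional lower bound $|L(1,\chi)|\gg q^{-1/2}$, which requires a case split between real and complex characters and, in the real case, invokes the class number formula. Everything else is routine; if one prefers to avoid Siegel--Walfisz entirely, one can replace it by the P\'olya--Vinogradov bound $\sum_{n\le x}\chi(n)\ll\sqrt{q}\log q$ applied via partial summation to bound the integer tail $\sum_{n>y}\chi(n)/n$, and then pass to the prime-indexed sum by M\"obius inversion; the resulting estimate on the tail is weaker but still amply sufficient for the $q^{1/2+\epsilon}$ conclusion.
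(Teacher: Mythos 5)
Your proof is correct, but it is structured differently from the paper's. The paper uses Mertens's estimate to peel off the part of the product over $\ell\le\exp\{q^{1/2+\epsilon}\}$, which directly costs $q^{1/2+\epsilon}$, and then bounds the remaining segment $\prod_{\exp\{q^{1/2+\epsilon}\}<\ell\le y}(1-\chi(\ell)/\ell)$ by $O(1)$ via partial summation against the estimate $\sum_{n\le x}\Lambda(n)\chi(n)\ll x/\log x$ for $x\ge\exp\{q^{1/2+\epsilon}\}$; the latter is derived from a zero-free region in which the possible Siegel zero is controlled by the trivial bound $\beta<1-c/(q^{1/2}\log q)$ coming from the class number formula. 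Your proof instead splits at $y=\exp(q^{1/2})$, using the trivial Mertens bound when $y$ is small, and for large $y$ rewrites the truncated product as $L(1,\chi)^{-1}\prod_{\ell>y}(1-\chi(\ell)/\ell)^{-1}$, bounding the first factor by $q^{1/2+\epsilon}$ through the class number formula (the same arithmetic input the paper uses, but applied directly to $L(1,\chi)$ rather than to the Siegel zero), and bounding the tail factor by $1+o(1)$ via Siegel--Walfisz with $A=2$. Both routes are valid. The structural difference is mostly cosmetic, since a bound on $1/L(1,\chi)$ and a bound on the Siegel zero are two faces of the same estimate; your decomposition is perhaps more conceptually transparent. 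One small point to be aware of: invoking Siegel--Walfisz makes the implied constant ineffective, whereas the paper's explicit zero-free-region argument is effective; this is harmless here since the lemma allows $\ll_\epsilon$, and, as you note, one could substitute P\'olya--Vinogradov to restore effectiveness. Also, the factorization $\prod_{\ell\le y}(1-\chi(\ell)/\ell)=L(1,\chi)^{-1}\prod_{\ell>y}(1-\chi(\ell)/\ell)^{-1}$ implicitly uses the conditional convergence of the Euler product at $s=1$ for non-principal $\chi$; this is classical but worth stating.
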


\begin{proof} Mertens's estimate implies that
\[
\prod_{\ell\le y}\left(1-\frac{\chi(\ell)}\ell\right) \ll q^{1/2+\epsilon}\prod_{\exp\{q^{1/2+\epsilon}\} <\ell\le y}\left(1-\frac{\chi(\ell)}\ell\right).
\]
Moreover, by the discussion in~\cite[p. 123]{Da}, we have that
\begin{equation}\label{pntap}
\sum_{n\le x}\Lambda(n) \chi(n) \ll_\epsilon\frac x{\log x}\quad(x\ge\exp\{q^{1/2+\epsilon}\}),
\end{equation}
using the trivial bound $\beta < 1 - c/(q^{1/2} \log{q})$ for the Siegel zero
provided by the class number formula.
Partial summation then implies that
\begin{align*}
\log\left\{\prod_{ \exp\{q^{1/2+\epsilon}\} <\ell\le y}\left(1-\frac{\chi(\ell)}{\ell}\right)\right\}
&= -\sum_{\substack{n>1 \\ \ell|n \, \Rightarrow\, \exp\{q^{1/2+\epsilon}\}<\ell \le y}} \frac{\Lambda(n)\chi(n)}{n\log n}  \\
&= -\sum_{\exp\{q^{1/2+\epsilon}\} < n \le y} \frac{\Lambda(n)\chi(n)}{n\log n} +O(1) \ll 1,
\end{align*}
which completes the proof of the lemma.
\end{proof}

The next lemma, which is essentially due to Elliott, allows us to bound the value of $L(1, \chi)$ by a very short product for most quadratic characters $\chi$.

\begin{lma} \label{lemma-Elliot}
Let $\delta\in(0,1]$ and $Q\ge3$. There is a set $\mathcal{E}_\delta(Q)\subset\Z\cap[1,Q]$ of size $ \ll Q^{\delta}$  such that if $\chi$ is a non-principal, quadratic Dirichlet character modulo some $q\le Q$ and of conductor not in $\mathcal{E}_\delta(Q)$, then
$$
\prod_{y<\ell\le z}\left(1 - \frac{\chi(\ell)}{\ell}\right)\asymp_\delta 1 \quad(z\ge y\ge\sqrt{\log Q}).
$$
\end{lma}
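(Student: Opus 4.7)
The plan is to reduce the multiplicative statement to an additive one about prime sums of $\chi(\ell)/\ell$, and then to handle three separate ranges of primes---very small, large, and intermediate---with only the intermediate range contributing to the exceptional set.

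\emph{Reduction.} Applying $\log(1-t) = -t + O(t^2)$ with $|t| \le 1/\sqrt{\log Q}$, and using $\sum_{\ell > \sqrt{\log Q}} \ell^{-2} = O(1)$, the conclusion $\prod_{y<\ell\le z}(1-\chi(\ell)/\ell)\asymp_\delta 1$ is equivalent to
\begin{equation*}
\Bigl|\sum_{y<\ell\le z}\frac{\chi(\ell)}{\ell}\Bigr| = O_\delta(1) \qquad \text{uniformly in } z\ge y\ge\sqrt{\log Q}.
\end{equation*}
After placing into $\mathcal{E}_\delta(Q)$ those conductors $q\le Q$ with atypically many prime divisors exceeding $\sqrt{\log Q}$ (whose count is $\ll Q^{\delta/2}$ by standard moment estimates for $\omega(q)$), I may further assume $\chi$ is primitive of conductor $q\le Q$.

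\emph{Very-small range, $\sqrt{\log Q}\le\ell\le(\log Q)^A$.} For $A=A(\delta)$ a large constant, Mertens' theorem gives the trivial bound $\sum_\ell 1/\ell = \log(2A)+o(1) = O_\delta(1)$, so the sum is bounded without any exceptional characters. \emph{Large range, $\ell\ge\exp(q^{1/2+\epsilon_0})$.} The argument of Lemma~\ref{lemma-truncated} applies verbatim for every non-principal real $\chi$: the Siegel--Landau lower bound $\beta < 1 - c/(q^{1/2}\log q)$ from the class number formula, combined with \eqref{pntap} and partial summation, yields the $O(1)$ bound unconditionally.

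\emph{Intermediate range, $(\log Q)^A\le\ell\le\exp(q^{1/2+\epsilon_0})$.} This is where the exceptional set arises. Partition dyadically into $O(\log Q)$ blocks $(U,2U]$. For each, I would establish a $2k$-th moment bound
\begin{equation*}
\sum_{\chi\,\text{prim.\ real,\ cond.}\le Q}^{\flat}
\Bigl|\sum_{U<\ell\le 2U}\frac{\chi(\ell)}{\ell}\Bigr|^{2k}
\ll Q\,(2k-1)!!\Bigl(\sum_{U<\ell\le 2U}\frac{1}{\ell^2}\Bigr)^k + E_{\text{off}},
\end{equation*}
where the first term is the ``diagonal'' arising from those $2k$-tuples $(\ell_1,\dots,\ell_{2k})$ whose product is a perfect square---so $\chi(\ell_1\cdots\ell_{2k})=1$ outside bad conductors and $\sum_{\chi}^\flat\chi(\ell_1\cdots\ell_{2k})\sim Q$---with the number of perfect pairings being $(2k-1)!!$, and $E_{\text{off}}$ is the off-diagonal contribution from non-square products, controlled by the Heath-Brown/Jutila quadratic large sieve. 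The diagonal is $\ll Q(Ck/U\log U)^k$. Markov's inequality with $k$ of order $\log Q/\log\log Q$ and a sufficiently large constant threshold $V=V(\delta)$ makes each dyadic block contribute $\ll Q^{\delta/2}$ bad characters; union-bounding over the $O(\log Q)$ blocks yields $|\mathcal{E}_\delta(Q)|\ll_\delta Q^\delta$.

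\emph{Main obstacle.} The technical crux is the moment bound in the intermediate range: because $U$ can be as small as $(\log Q)^A$, we must take $k$ as large as $\log Q/\log\log Q$ for the diagonal to beat the trivial $Q$ factor from the number of characters. For such large $k$, keeping $E_{\text{off}}$ dominated by the diagonal requires a careful application of the quadratic large sieve, balancing the combinatorial explosion of $2k$-tuples against the cancellation in the character sums $\sum_\chi^\flat\chi(m)$ for non-square $m$. Making this quantitative is essentially the content of Elliott's original moment argument for $L(1,\chi)$; once the moment estimate is in hand, the optimization to reach the $Q^\delta$ threshold is routine.
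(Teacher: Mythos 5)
Your high-level reduction to the prime sums $\sum_{y<\ell\le z}\chi(\ell)/\ell$, and the treatment of the ``very small'' range via Mertens and of the ``very large'' range $\ell\ge\exp(q^{1/2+\epsilon_0})$ via Lemma~\ref{lemma-truncated}, are both sound, and they match steps implicit in the paper's argument. However, your plan for the intermediate range is genuinely different from the paper's, and it has a gap that I do not believe can be repaired within the moment/large-sieve framework you describe.

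The paper's actual proof avoids moments entirely. Following Proposition~2.2 of Granville--Soundararajan, it invokes Montgomery's zero-density estimate to show that for all but $\ll Q^\delta$ conductors the $L$-function has no zero in a box $\{\sigma\ge1-\delta/12,\,|t|\le Q^3\}$; the truncated explicit formula (Davenport, eqn.~(17), p.~120) then gives $\sum_{n\le x}\Lambda(n)\chi(n)\ll_\delta x/\log x+\log^2 Q$ uniformly for $2\le x\le e^Q$, which together with~\eqref{pntap} and partial summation controls the prime sums for all $y\ge\log^2 Q$ at one stroke. Mertens then closes the gap from $\sqrt{\log Q}$ to $\log^2 Q$. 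One advantage of this route is precisely that it handles every scale simultaneously, with no dyadic decomposition and no union bound.

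The concrete problems with your intermediate range argument are the following. First, the number of dyadic blocks $(U,2U]$ covering $[(\log Q)^A,\exp(q^{1/2+\epsilon_0})]$ is $\asymp q^{1/2+\epsilon_0}/\log 2$, which is polynomial in $Q$ for $q$ near $Q$, not $O(\log Q)$ as you claim. Your union bound therefore produces $\gg Q^{1/2}\cdot Q^{\delta/2}$ exceptional characters, far more than $Q^\delta$. Second, and more seriously, the $2k$-th moment computation cannot control blocks with $U$ much larger than a small power of $Q$: when $U$ is large, the products $\ell_1\cdots\ell_{2k}$ range up to $(2U)^{2k}$, which dwarfs $Q$; neither P\'olya--Vinogradov nor Heath-Brown's quadratic large sieve gives any cancellation in $\sum_{\chi}^{\flat}\chi(\ell_1\cdots\ell_{2k})$ for non-squares of size $\exp(Q^{1/2})$ or beyond. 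Moment arguments of Elliott's type are therefore confined to $\ell\le Q^{O(1/k)}$, which with $k$ bounded still leaves an uncovered range from $Q^{O(1)}$ up to $\exp(q^{1/2+\epsilon_0})$. Bridging this gap requires exactly the kind of zero-free-region or zero-density input that the paper uses, so the moment argument is not an alternative here, only a partial substitute for the short range $\ell\le Q^{o(1)}$. Your description of $E_{\text{off}}$ as merely needing ``a careful application of the quadratic large sieve'' understates the obstruction: the issue is structural, not one of optimizing constants.

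A minor remark on your primitivity reduction: excluding ``moduli $q$ with atypically many prime divisors'' does not sit well with the statement, since $\mathcal{E}_\delta(Q)$ is a set of conductors and for a fixed admissible conductor $d$ one must allow all $q\le Q$ with that conductor. In fact this step is unnecessary: for $\ell>(\log Q)^A$ the correction from primes $\ell\mid q$, $\ell\nmid d$ is $\exp\bigl(O(\sum_{\ell\mid q,\,\ell>(\log Q)^A}1/\ell)\bigr)=\exp\bigl(O(1/(\log Q)^{A-1})\bigr)\asymp1$, so you may pass to the primitive character in that range without discarding any conductors. But this does not rescue the intermediate-range argument.
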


\begin{proof} We borrow from the proof of Proposition 2.2 in~\cite{GS}, which is essentially due to Elliott. Without loss of generality, we may assume that $Q$ is large enough. By Theorem 1 in~\cite{Mont}, for every $\sigma_0\in[4/5,1]$, $Q\ge2$ and $T\ge1$, there are $\ll (Q^2T)^{2(1-\sigma_0)/\sigma_0}(\log Q)^{14}$ primitive characters of conductor below $Q$ whose $L$-function has a zero in the region $\{s=\sigma+it\in\mathbb{C}:\sigma\ge\sigma_0,\,|t|\le T\}$. Let $\mathcal{E}_\delta(Q)$ be the set of conductors corresponding to these exceptional characters with $\sigma_0=1-\delta/12\ge11/12$ and $T=Q^3$. If $\chi$ is a Dirichlet character mod $q\in[1,Q]$ of conductor not in $\mathcal{E}_\delta(Q)$, then $L(s,\chi)$ has no zeroes in $\{s=\sigma+it\in\mathbb{C}:\sigma\ge1-\delta/12,\,|t|\le Q^3\}$. So by~\cite[eqn. (17), p. 120]{Da} applied with $T=\min\{Q^3,x\}$, we find that
$$
\sum_{n\le x}\Lambda(n)\chi(n)\ll x^{1-\delta/12}\log^2x +\frac{x\log^2x}{Q^3}+\log^2Q \ll_\delta \frac x{\log x}+\log^2Q\quad(2\le x\le e^{Q}).
$$
The above estimate also holds for $x\ge e^Q$ by~\eqref{pntap}. Together with partial summation, this implies that
\begin{equation} \label{justproven}\begin{split}
\log\left\{\prod_{y<\ell\le z}\left(1-\frac{\chi(\ell)}{\ell}\right)\right\}
&= -\sum_{\ell|n\, \Rightarrow \, y<\ell\le z}\frac{\Lambda(n)\chi(n)}{n\log n}
= -\sum_{y< n\le z}\frac{\Lambda(n)\chi(n)}{n\log n} +O(1) \ll_\delta 1
\end{split}\end{equation}
for $z\ge y\ge\log^2Q$, that is to say, the lemma does hold in this range of $y$ and $z$. Finally, if $\sqrt{\log{Q}} \leq y < \log^2 Q$, then setting $w = \min\{z, \log^2Q\}$, we have
that
\begin{align*}
\prod_{y<\ell\le z}\left(1-\frac{\chi(\ell)}{\ell}\right)
&= \prod_{y<\ell\le w}\left(1-\frac{\chi(\ell)}{\ell}\right)
   \prod_{w<\ell\le z}\left(1-\frac{\chi(\ell)}{\ell}\right) \asymp_\delta 1
\end{align*}
by~\eqref{justproven} and Mertens's estimate, and the lemma follows.
\end{proof}

Next, we state the Bombieri-Vinogradov theorem \cite{Bombieri, Vinogradov1, Vinogradov2}, which will be used to prove Theorem \ref{Kbiglower}.
\begin{lma}[Bombieri-Vinogradov]\label{lemma-BVthm}
Let $A > 0$ be fixed. Then there exists a $B=B(A)>0$, depending on $A$, such that
\begin{equation*}
\sum_{q \leq x^{1/2}/(\log x)^{B}}  \max_{\substack{y\le x \\ (a,q)=1}} \left| \pi(y; q,a) - \frac{{\rm li}(y)}{\phi(q)}\right| \ll \frac{x}{(\log x)^A}.
\end{equation*}
\end{lma}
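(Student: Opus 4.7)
The plan is to follow the classical route to the Bombieri--Vinogradov theorem, via orthogonality of Dirichlet characters combined with the large sieve. First I would pass from $\pi(y;q,a)$ to $\psi(y;q,a)$ by partial summation (the contribution of prime powers being a harmless $O(x^{1/2}\log x)$ when summed over all $q\le Q$), and then invoke orthogonality to write
\[
\psi(y;q,a) - \frac{y}{\phi(q)} = \frac{1}{\phi(q)}\sum_{\chi \neq \chi_0 \pmod{q}} \bar\chi(a)\,\psi(y,\chi) + O(\log^2 x),
\]
where $\psi(y,\chi) := \sum_{n \le y}\Lambda(n)\chi(n)$. Taking absolute values inside eliminates the dependence on $a$, and inducing each character from its primitive version reduces the left-hand side of the lemma to a weighted sum over primitive characters of conductor $q^* \le Q := x^{1/2}/(\log x)^B$, the weight $\sum_{q^* \mid q,\, q \le Q}\phi(q^*)/\phi(q)$ being $O(\log Q)$ and therefore harmless.

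Next I would split the conductor range at $Q_0 := (\log x)^C$ with $C=C(A)$ chosen large. Primitive characters of conductor $\le Q_0$ are handled by the Siegel--Walfisz theorem, which gives $\max_{y\le x}|\psi(y,\chi)| \ll x\exp(-c\sqrt{\log x})$ uniformly, so this range contributes $o(x/(\log x)^A)$. For the range $Q_0 < q^* \le Q$ I would decompose $\Lambda$ via Vaughan's identity with parameters $U = V = x^{1/3}$, producing Type I sums $\sum_{n\le y}a_n\chi(n)$ with $a_n$ supported on multiples of small integers, and Type II bilinear sums $\sum_{M < m \le 2M}\sum_{N < n \le 2N}\alpha_m\beta_n\chi(mn)$ with $x^{1/3}\ll M,N\ll x^{2/3}$. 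The Type I sums are bounded by executing the inner character sum in arithmetic progressions and invoking the P\'olya--Vinogradov inequality, while the Type II sums are attacked by Cauchy--Schwarz followed by the multiplicative large sieve inequality
\[
\sum_{q \le Q}\frac{q}{\phi(q)}\sideset{}{^*}\sum_{\chi \pmod{q}}\left|\sum_{m \asymp M}a_m\chi(m)\right|^2 \ll (Q^2 + M)\sum_m|a_m|^2,
\]
which yields an overall bound of $(Q^2 + x)^{1/2}(\log x)^{O(1)}$ for each dyadic block. Choosing $B$ so that $Q^2 \le x/(\log x)^{2A+O(1)}$ places the total comfortably below $x/(\log x)^A$.

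The principal obstacle is that the standard large sieve does not directly accommodate the $\max_{y \le x}$ inside the outer sum. I would overcome this by invoking the maximal form of the large sieve, essentially Gallagher's lemma, which costs only a further $(\log x)^{O(1)}$ factor that can be absorbed into $B$; alternatively, one can express $\psi(y,\chi)$ via a truncated Perron integral and reduce the supremum over $y$ to a supremum over a discrete set of $O(\log x)$ heights. The reduction from general to primitive characters and the passage from $\psi$ back to $\pi$ are both routine, contributing only logarithmic losses that are absorbed by taking $B=B(A)$ sufficiently large.
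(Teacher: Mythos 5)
The paper does not prove this lemma at all: it is quoted directly from the literature, with citations to Bombieri and to Vinogradov's two papers, so there is no internal proof to compare against. Your proposal is a correct sketch of the standard modern proof (Vaughan's identity plus the multiplicative large sieve, as in Davenport, Chapter~28). A few small remarks on precision: after you reduce to primitive characters, the natural weight in front of $\sideset{}{^*}\sum_{\chi\bmod q^*}$ is $1/\phi(q^*)$ (up to the $O(\log Q)$ factor you mention), whereas the large sieve inequality you quote carries the weight $q/\phi(q)$; reconciling the two requires a dyadic decomposition over $q^* \in (Q_1, 2Q_1]$ so that the factor $1/q^* \asymp 1/Q_1$ can be pulled out, and it is precisely this $1/Q_1$ that makes the small-conductor range (handled by Siegel--Walfisz after choosing $Q_0 = (\log x)^C$ with $C$ large) and the large-conductor range both acceptable. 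Correspondingly, the bound you display for each dyadic block, $(Q^2 + x)^{1/2}(\log x)^{O(1)}$, is not quite the right shape: for the block at conductor scale $Q_1$, with $M \asymp N \asymp x^{1/2}$, the large sieve gives roughly $Q_1^{-1}(Q_1^2 + x^{1/2})x^{1/2}(\log x)^{O(1)}$, which is $\ll x/(\log x)^{\min(B,C)-O(1)}$ across the whole range $Q_0 < Q_1 \le Q$. This is exactly what you need, but the bookkeeping in the proposal as written does not make clear why the final bound beats $x/(\log x)^A$. The remaining steps — Siegel--Walfisz for small conductors, Gallagher's maximal large sieve (or Perron with a discrete grid of heights) to handle $\max_{y\le x}$, and the routine passage from $\psi$ to $\pi$ — are all correct and standard. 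In short, the plan is sound and follows the canonical route; tightening the quantitative statement of the Type~II bound and making the dyadic decomposition over conductors explicit would turn this into a complete argument.
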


Finally, in order to prove Theorem~\ref{Kasymp}, we need the following short interval version of the Bombieri-Vinogradov theorem, due to the third author~\cite{DK}.

\begin{lma}\label{average bv} Fix $\epsilon>0$ and $A\ge1$. For $x\ge h\ge2$ and $1\le Q^2\le h/x^{1/6+\epsilon}$, we have that
\[
\int_x^{2x}\sum_{q\le Q}\max_{(a,q)=1}\left|\psi(y+h;q,a)-\psi(y;q,a)-\frac{h}{\phi(q)}\right|dy\ll\frac{xh}{(\log x)^A}.
\]
If, in addition, the Riemann hypothesis for Dirichlet $L$-functions is true, then the above estimate holds when $1\le Q^2\le h/x^\epsilon$.
\end{lma}

\section{Proof of Theorem \ref{thm-ksmall}}

By Remark\ \ref{rem:aftercor2.2} we readily have that
\begin{equation}\label{eqn:trivialbound}
\# S(M, K) \le \sum_{k \leq K} \sum_{|j| < 2 \sqrt{k}} S_{k,j},
\end{equation}
where
$$
S_{k,j} :=
\# \left\{ m \leq M : km^2 + j m + 1 \;\mbox{is  prime} \right\} .
$$
Using the combinatorial sieve to bound  $S_{k,j}$, one immediately obtains as in~\cite{BPS}
that, for any fixed $K$, $\# S(M, K) \ll_K M/\log{M}$. Keeping track of the dependence on $j$ and $k$ of the upper bound for $S_{j,k}$,
we prove Theorem \ref{thm-ksmall}.

In the notation of Lemma \ref{fundlemma}, let $\mathcal{A} = \left\{ km^2 + j m + 1: m \leq M \right\}$, and note that
\begin{align*}
\#\{a\in \mathcal A:a\equiv 0\pmod d\} & = \# \{m \leq M : km^2 + j m + 1 \  \equiv 0 \  \pmod d \} \\
    & = M \cdot \frac{\rho_{k,j}(d)}{d} + O \left( \rho_{k,j}(d) \right),
\end{align*}
where
\[
\rho_{k,j}(d) := \# \left\{ c \in \Z/d\Z : kc^2 + j c + 1 \equiv 0 \pmod d \right\}.
\]
The Chinese remainder theorem implies that $\rho_{j, k}$ is a multiplicative function. Moreover, by a straightforward computation, we find that
\[
\rho_{k,j}(\ell)
	=  \begin{cases}
    		\leg{k - j}{2}^2 & \mbox{if $\ell = 2$}, \cr
 		   1 + \leg{j^2-4k}{\ell}      & \mbox{if $\ell \nmid k$ and $\ell \neq 2$}, \cr
    			\leg{j^2}{\ell}            & \mbox{if $\ell \mid k$},
    	\end{cases}
\]
for all primes $\ell$. Since $S_{k,j} \leq S(\mathcal{A},y) + y$ for all $y$, applying Lemma \ref{fundlemma} with $y = M^{1/2}$ and $u=1$ yields the estimate
\begin{align*}
S_{k,j} & \ll M\prod_{\ell\le y}\left(1-\frac{\rho_{k,j}(\ell)}\ell\right) + \sum_{ d \leq M^{1/2} }\mu^2(d)|\rho_{k,j}(d)| + M^{1/2} \\
&\ll M\prod_{\ell|k,\,\ell\le y}\left(1-\frac{\leg{j^2}\ell}\ell\right)
\prod_{\ell\nmid k,\,\ell\le y} \left(1-\frac{1+\leg{j^2-4k}\ell}\ell \right) + M^{1/2}\log M \\
&\ll \frac M{\log M} \cdot \frac k{\phi(k)} \cdot \prod_{\ell\le y}\left(1-\frac{\leg{j^2-4k}{\ell}}\ell\right)+ M^{1/2}\log M.
\end{align*}
This implies that
\[
\# S(M,K) \ll \frac M{\log M}\sum_{k\le K,\, j < 2\sqrt{k}} \frac k{\phi(k)} \prod_{\ell \le y}\left(1-\frac{\left(\frac{j^2-4k}\ell\right)}\ell\right)
+ M^{1/2}K^{3/2}\log M.
\]

Observing that $j^2-4k\in[-4K,-1]$ for $j$ and $k$ as above, we fix $d\in[1,4K]$ and seek a bound for the sum
\[
T_d := \sum_{\substack{k\le K,\, |j| < 2\sqrt{k} \\ j^2-4k=-d}} \frac k{\phi(k)}
\asymp \sum_{\substack{k\le K,\, |j| < 2\sqrt{k} \\ j^2-4k=-d}} \prod_{\ell | k} \left(1 + \frac{1}{\ell}\right).
\]
First, note that
\begin{align*}
\prod_{\ell | k,\,\ell>\sqrt{\log K}} \left(1 + \frac{1}{\ell}\right) & \ll  \prod_{\ell | k,\,\ell>\log K} \left(1 + \frac{1}{\ell}\right)
\ll \exp\left\{ \sum_{\ell|k,\,\ell>\log K}\frac1{\ell} \right\} \le \exp\left\{ \frac{\#\{\ell|k\}}{\log K} \right\} \ll 1,
\end{align*}
by Mertens's estimate and the fact that $k$ has at most $\frac{\log k}{\log 2}$ distinct prime factors. Therefore,
\begin{align*}
\prod_{\ell | k} \left(1 + \frac{1}{\ell}\right) &\ll \prod_{\ell | k,\,\ell\le\sqrt{\log K}} \left(1 + \frac{1}{\ell}\right)
=\sum_{ \substack{ a | k \\ P^+(a)\le\sqrt{\log K} }}\frac{\mu^2(a)}a.
\end{align*}
So
\begin{align*}
T_d &\ll \sum_{P^+(a)\le\sqrt{\log K}} \frac{\mu^2(a)}a \sum_{\substack{k\le K,\, |j| < 2\sqrt{k}\\ a|k,\,j^2-4k=-d}}1
\le\sum_{P^+(a)\le(\log K)^{1/2}}\frac{\mu^2(a)}a\sum_{\substack{|j| < 2\sqrt{K}\\4a|j^2+d}}1\\
&\ll\sum_{P^+(a)\le(\log K)^{1/2}}\frac{\mu^2(a)}a\cdot \tau(a)\left(\frac{\sqrt{K}}a+1\right) \ll\sqrt{K},
\end{align*}
since $a\le e^{\pi(\sqrt{\log K})}\ll \sqrt{K}$ for all square-free integers $a$ with $P^+(a)\le\sqrt{\log K}$. Consequently,
\[
\#S(M,K) \ll \frac{M\sqrt{K}}{\log M}\sum_{d\le4K}\prod_{\ell\le y}\left(1-\frac{\leg{-d}{\ell}}\ell\right)+M^{1/2}K^{3/2}\log M
\]
Using Lemma \ref{lemma-Elliot} on truncated products of $L$-functions with $\delta=1/4$ and $Q=4K$, we find that there is a set $\mathcal{E}$ of $O(K^{1/4})$ integers in $[1,4K]$ such that if $d\in[1,4K]$ and the conductor of $\leg{-d}{\cdot}$ is not in $\mathcal{E}$, then
\[
\prod_{w_1<\ell\le w_2}\left(1-\frac{\leg{-d}{\ell}}{\ell}\right)\asymp 1\quad(w_2\ge w_1\ge \sqrt{\log(4K) }).
\]
So for such a $d$ we find that
\begin{equation}\label{shortproduct}
\prod_{\ell\le y}\left(1-\frac{\leg{-d}{\ell}}{\ell}\right) \asymp \prod_{\ell\le z}\left(1-\frac{\leg{-d}{\ell}}{\ell}\right),
\end{equation}
where $z=\min\{y,\sqrt{\log(4K)}\}$. For the exceptional $d$'s, we write $-d = -a^2d_1$, where $d_1$ denotes the conductor of $\leg{-d}{\cdot}$ and note that
\[
\prod_{\ell\le y}\left(1-\frac{\leg{-d}{\ell}}\ell \right) \ll \frac{a}{\phi(a)} \prod_{\ell\le y}\left(1-\frac{\leg{-d_1}\ell}{\ell} \right)
\ll \frac{a}{\phi(a)}  \; |d_1|^{3/4}
\]
by Lemma \ref{lemma-truncated}. Hence,
\begin{align*}
\sum_{\substack{d\le4K \\ {\rm cond}\left(\leg{-d}{\cdot}\right)\in\mathcal{E}}} \prod_{\ell\le y}\left(1-\frac{\leg{-d}{\ell}}\ell\right)
& \le \sum_{d_1\in\mathcal{E}} \sum_{1\le|a|\le\sqrt{4K/|d_1|}} \prod_{\ell \leq y} \left(1-\frac{\leg{-d_1a^2}{\ell}}{\ell}\right)\\
&\ll\sum_{d_1\in\mathcal{E}} \sum_{1\le |a|\le\sqrt{4K/|d_1|}} \frac{a}{\phi(a)} \; |d_1|^{3/4}\\
&\ll \sum_{d_1\in\mathcal{E}} |d_1|^{1/4}\sqrt{K}\ll K^{1/4} \cdot K^{1/4}\cdot\sqrt{K}=K.
\end{align*}
The above relation and~\eqref{shortproduct} then imply that
\begin{equation}\label{beforelast}
\#S(M,K) \ll \frac{M\sqrt{K}}{\log M}\sum_{ d \le 4K }\prod_{\ell \le z}
\left(1-\frac{\leg{-d}{\ell}}{\ell} \right)+\frac{M K^{3/2}}{\log M}.
\end{equation}
In order to control the above sum, we proceed by expanding the product to a sum and inverting the order of summation.
We have that
\begin{align*}
\sum_{d\le4K}\prod_{\ell \le z} \left(1-\frac{\leg{-d}{\ell}}{\ell}\right)
&=\sum_{P^+(a)\le z} \frac{\mu(a)}a \sum_{d\le 4K}\left(\frac{-d}a\right).
\end{align*}
If $a=1$, the inner sum is $4K+O(1)$; else, it is $\ll a$. So
\begin{align*}
\sum_{d\le4K}\prod_{\ell\le z}\left(1-\frac{\left(\frac{-d}{\ell}\right)}{\ell}\right)
&\ll K+\sum_{P^+(a)\le z}\mu^2(a) = K+2^{\pi(z)} \le K + 2^{\pi(\sqrt{\log(4K)})} \ll K.
\end{align*}
Inserting the last estimate into~\eqref{beforelast}, we obtain the inequality
\[
\#S(M,K) \ll \frac{MK^{3/2}}{\log M},
\]
which completes the proof of Theorem~\ref{thm-ksmall}.


\section{Proof of Theorem \ref{Kasymp}}

Define
\[
R(M,K) = \{ M/2<m\le M,\,K/2<k\le K: \text{there is no prime}\ p\equiv 1\pmod m\ \text{in}\ I_{m^2k} \}.
\]
First, we prove an intermediate result for the cardinality of $R(M,K)$.

\begin{thm}\label{Kasymp_v1} Fix $A\ge1$ and $\epsilon\in(0,1/6]$. If $M \leq K^{1/4-\epsilon}$, then
\[
 \# R(M, K) \ll_{\epsilon,A} \frac{MK}{(\log K)^A}.
\]
If, in addition, the Riemann hypothesis for Dirichlet $L$-functions is true, then the above estimate holds when $M\le K^{1/2-\epsilon}$.\end{thm}

\begin{proof} We prove both parts of the theorem simultaneously. Set $h=M\sqrt{K}$ and
\[
E(y,h;q,a) = \left| \psi(y+h;q,a) - \psi(y;q,a) - \frac{h}{\phi(q)}\right|,
\]
and note that if the pair $(m,k)\in R(M,K)$, then
\[E((m\sqrt{k}-1)^2,h;m,1) \gg \frac{h}{\phi(m)} \geq \frac{h}{m} \asymp \sqrt{K}.
\]
Consequently,
\begin{align*}
\# R(M,K)
 &\ll \frac{1}{\sqrt{K}}\sum_{M/2<m\le M}\sum_{K/2<k\le K} E((m\sqrt{k}-1)^2,h;m,1).
\end{align*}
Next, observe that $(m\sqrt{k}-1)^2\in J:=[M^2K/10,M^2K]$. We cover the interval $J$ by $O((M^2K)^{1-\lambda})$ subintervals $J_r$ of length $(M^2K)^{\lambda}$ each, where $\lambda\in[1/4,1/2)$ is a fixed parameter to be chosen later. If $(m\sqrt{k}-1)^2\in J_r$, then for every $y\in J_r$ we have that
\begin{align*}
\left| E(y,h;m,1)-E((m\sqrt{k}-1)^2,h;m,1) \right|
	&\le  \#\{ n \in J_r\cup(J_r+h): n \equiv 1\pmod m\} \\
	&\ll 1+ \frac{(M^2K)^{\lambda}}{m}
		\asymp  \frac{(M^2K)^{\lambda}}{M} ,
\end{align*}
since $M\le K^{1/2}$ and $\lambda\ge1/4$. So if we let $\meas(J_r)$ denote the length of the interval $J_r$, then
\begin{align*}
E((m\sqrt{k}-1)^2,h;m,1)
	&= \frac1{\meas(J_r)}\int_{J_r} E(y,h;m,1)dy+O\left( \frac{(M^2K)^{\lambda}}{M} \right) \\
	& \ll \frac1{(M^2K)^{\lambda}} \int_{J_r} E(y,h;m,1)dy + \frac{(M^2K)^{\lambda} }{M},
\end{align*}
and consequently
\begin{align*}
\# R(M,K)
	 &\ll \frac{1}{\sqrt{K}}\sum_{ M/2<m\le M}\sum_{J_r}\left(\frac1{(M^2K)^{\lambda}} \int_{J_r} E(y,h;m,1)dy
 		+ \frac{(M^2K)^{\lambda}}{M}\right)
			 \sum_{\substack{ K/2<k\le K \\ (m \sqrt{k} - 1)^2 \in J_r}} 1.
\end{align*}
For every fixed $m\in[M/2,M]$ and every fixed interval $J_r$, there are at most
$1+O((M^2K)^{\lambda}/M^2)$ values of $k$ with $(m\sqrt{k}-1)^2\in J_r$. Since
$$ \frac{(M^2K)^\lambda}{M^2} \gg 1 \iff M \ll K^{\lambda/(2 - 2 \lambda)},$$
by choosing  $\lambda \in [1/4, 1/2)$ appropriately in terms of $\epsilon$, we get
that $M\le K^{1/2-\epsilon}$ implies that
$1+O((M^2K)^{\lambda}/M^2) = O((M^2K)^{\lambda}/M^2)$.  Therefore, there are at most
$O((M^2K)^{\lambda}/M^2)$ values of $k$ with $(m\sqrt{k}-1)^2\in J_r$, and we deduce that
\begin{align}
\# R(M,K)
 &\ll \frac{1}{\sqrt{K}}\sum_{M/2<m\le M}  \sum_{J_r}  \left(\frac1{(M^2K)^{\lambda}} \int_{J_r} E(y,h;m,1)dy
 		+ \frac{(M^2K)^{\lambda}}{M}\right)	\frac{(M^2K)^{\lambda}}{M^2}  \nonumber \\
 &\le \frac{1}{M^2\sqrt{K}}\sum_{M/2<m\le M}  \int_{M^2K/20}^{M^2K}E(y,h;m,1)dy + O(M^{2\lambda} K^{1/2+\lambda} ). 	
\label{before-bv}
\end{align}
If $M \leq K^{1/4 - \epsilon}$, then we have that
\[
M^2 \leq \frac{M\sqrt{K}}{(M^2K)^{1/6+\epsilon/2}} = \frac{h}{(M^2K)^{1/6+\epsilon/2}},
\]
and we can apply the Lemma~\ref{average bv} to get that
\begin{equation}\label{bv-short-intervals}
\sum_{M/2<m\le M}  \int_{M^2K/20}^{M^2K}E(y,h;m,1)dy \ll \frac{M^2K h}{(\log K)^A} = \frac{M^{3} K^{3/2}}{(\log K)^A}.
\end{equation}
Similarly, if $M \leq K^{1/2 - \epsilon}$, then
\[
M^2 \leq \frac{M\sqrt{K}}{(M^2K)^{\epsilon/2}} = \frac{h}{(M^2K)^{\epsilon/2}} .
\]
So, if the Riemann hypothesis for Dirichlet $L$-functions holds, then Lemma \ref{average bv} implies that \eqref{bv-short-intervals} holds in this case too. Inserting this relation into \eqref{before-bv}, we deduce that
\[
\# R(M,K)\ll_{\epsilon,A} \frac{MK}{(\log K)^{A}} + M^{2\lambda} K^{1/2+\lambda} \ll_{\epsilon,A} \frac{MK}{(\log K)^A}
\]
since $\lambda<1/2$ and $M\le K^{1/2-\epsilon}$.
This completes the proof of Theorem \ref{Kasymp_v1}.
\end{proof}

\begin{proof}[Proof of Theorem~\ref{Kasymp}] Let $\epsilon$, $M$ and $K$ be as in the statement of Theorem \ref{Kasymp}. Clearly
\begin{align*}
 \#\{ m\le M,\,k\le K :\text{there is no prime}\ p\equiv 1\pmod m\ \text{in}\ I_{m^2k} \} & \le \sum_{2^a\le 2M,\,2^b\le 2K} \# R(2^a,2^b).
\end{align*}
If $2^b\le K^{1-\epsilon}$, then we use the trivial bound $\# R(2^a,2^b)\le 2^{a+b}$. Otherwise, we have that
\[
2^{a-1}\le M\le
	\begin{cases}
		K^{1/4-\epsilon}\le 2^{b(1/4-\epsilon/2)}   &\text{if}\ M\le K^{1/4-\epsilon},\cr
		K^{1/2-\epsilon}\le 2^{b(1/2-\epsilon/2)}   &\text{if}\ M\le K^{1/2-\epsilon},
	\end{cases}
\]	
and so Theorem~\ref{Kasymp_v1} implies that $\#R(2^a,2^b)\ll_{\epsilon,A} 2^{a+b}/b^A$. Consequently,
\begin{align*}
& \# \{ m\le M,\,k\le K :\text{there is no prime}\ p\equiv 1\pmod m\ \text{in}\ I_{m^2k} \} \\
 &\quad \ll_A \sum_{ \substack{ 2^a\le 2M \\ 2^b\le K^{1-\epsilon} }}2^{a+b}+ \sum_{\substack{ 2^a\le 2M \\ K^{1-\epsilon}< 2^b\le 2K }} \frac{2^{a+b}}{b^A} \ll_{\epsilon,A}\frac{MK}{(\log K)^A}.
\end{align*}
The above estimate and Remark \ref{rem:aftercor2.2} complete the proof of Theorem~\ref{Kasymp}.
\end{proof}


\section{Proof of Theorem \ref{Kbiglower}}

Note that, as a direct consequence of Corollary \ref{basicprop} and the fact that $I_1=(0,4)$, the primes 2 and 3 are always contained in $S(M,K)$. In particular,  $\# S(M,K) \ge 2$ and hence we may assume without loss of generality that $K$ is large enough. Also, we may assume that $M$ is an integer, so that the interval $(3M/4,M]$ always contain integers.

From Remark \ref{rem:aftercor2.2}, we have that
$$
\# S(M, K) = \sum_{m \leq M} \sum_{k \leq K} \mathbb{I} (m,k).
$$
Now, note that
$$
\mathbb{I} (m,k)  \ge \frac{\phi(m) \log(4\sqrt{k})}{8m\sqrt{k}} \cdot \# \left\{ p \in I_{m^2k} : p \equiv 1 \pmod{m} \right\}
$$
by the Brun-Titchmarsh inequality. Therefore we deduce that
\begin{align}
\# S(M, K) &\gg \frac{\log K}{\sqrt K}\sum_{ \substack{ 3M/4< m \leq M \\ K/5 < k \leq K }} \frac{\phi(m)}{m}
\sum_{ \substack{p \in I_{m^2k} \\ p \equiv 1 \pmod m}} 1 \nonumber\\
&\ge \frac{\log{K}}{\sqrt{K}} \sum_{M^2K/4< p < M^2K/3} \sum_{\substack{ 3M/4 < m \leq M \\ p \equiv 1 \pmod m}} \frac{\phi(m)}{m}
\sum_{ \substack{ K/5 < k \leq K \\ p \in I_{m^2k}}} 1 \label{Kbiglower firstinequality}
\end{align}
by switching the order of summation and restricting $p$ in the interval $(M^2K/4,M^2K/3]$. Fix $p$ and $m$ as in~\eqref{Kbiglower firstinequality} and note that if $k$ is an integer for which $p\in I_{m^2k}$, then we necessarily have that $K/5<k\le K$. So
$$
\sum_{\substack{K/5 < k \leq K \\ p \in I_{m^2k}}} 1 = \sum_{k\in\SZ\,:\,p \in I_{m^2k} } 1 =
\#\left\{ k\in \Z: \frac{p-2\sqrt{p}+1}{m^2} < k < \frac{p+2\sqrt{p}+1}{m^2} \right\} \gg \frac{\sqrt{p}}{m^2} \asymp \frac{\sqrt{K}}M,
$$
since
$$
\frac{4\sqrt{p}}{m^2}> \frac{4\sqrt{M^2K/4}}{M^2} = \frac{2\sqrt{K}}{M}\ge 2
$$
by our assumption that $M\le \sqrt{K}$. Consequently,
\begin{align}
\# S(M, K) &\gg \frac{\log K}{M} \sum_{M^2K/4 < p < M^2K/3} \sum_{\substack{ 3M/4 < m \le M  \\ p\equiv 1\pmod m}} \frac{\phi(m)}{m} \nonumber\\
& = \frac{\log K}{M} \sum_ { 3M/4 < m\le M} \frac{\phi(m)}{m} \left(
\pi(M^2K/3; m, 1)-\pi(M^2K/4;m,1) \right) \nonumber\\
& = \frac{\log K}{M}\left( \sum_ { 3M/4 < m\le M} \frac{\phi(m)}{m} \frac{{\rm li}(M^2K/3)-{\rm li}(M^2K/4)}{\phi(m)} + E\right), \label{before bv}
\end{align}
where
\[
E =  \sum_{3M/4<m\le M} \frac{\phi(m)}{m} \left( \pi(M^2K/3; m, 1)-\pi(M^2K/4;m,1) - \frac{{\rm li}(M^2K/3)-{\rm li}(M^2K/4)}{\phi(m)} \right) .
\]
The sum over $m$ in \eqref{before bv} is
\[
\sum_ { 3M/4 < m\le M} \frac{{\rm li}(M^2K/3)-{\rm li}(M^2K/4)}{m} \gg \sum_{ 3M/4 < m\le M} \frac{M^2K}{M\log(M^2K)} \asymp \frac{M^2K}{\log K}.
\]
It is in this step that we need that the interval $(3M/4,M]$ contains an integer.
Furthermore, we have that
\begin{align*}
|E| &\le \sum_{3M/4<m\le M} \left|\pi(M^2K/4; m, 1)-\pi(M^2K/3;m,1) - \frac{{\rm li}(M^2K/3)-{\rm li}(M^2K/4)}{\phi(m)} \right| \\
&\ll \frac{M^2K}{\log^2(M^2K)},
\end{align*}
by Lemma~\ref{lemma-BVthm}. Combining the above estimates, we find that there is an absolute constant $c$ such that
$$
\# S(M,K) \ge cMK+O\left(\frac{MK}{\log K}\right) \ge \frac{cMK}{2},
$$
provided that $K$ is large enough. This completes the proof of Theorem~\ref{Kbiglower}.

\section*{Acknowledgements} The authors would like to thank Roger Heath-Brown for suggesting some useful references about counting primes in short intervals. The second author would also like to thank Arul Shankar for enlightening discussions about the Cohen-Lenstra heuristics in the context of elliptic curves and abelian varieties over finite fields.
The work of the second author was supported by the Natural Sciences and Engineering Research Council of Canada [Discovery Grant 155635-2008]. The first, third, and fourth authors were supported by postdoctoral fellowships from the Centre de recherches math\'ematiques at Montr\'eal during the completion of most of this work. They are grateful for the financial support and the pleasant working environment.

\end{document}